\documentclass[a4paper]{amsart}
\usepackage{amssymb}
\usepackage{amsmath}
\usepackage{bm}
\usepackage[colorlinks,linkcolor=blue]{hyperref}
\usepackage{enumerate}
\usepackage{mathrsfs}
\usepackage[all]{xy}

\newcommand{\schc}{\mathcal{C}}

\newcommand{\sche}{\mathcal{E}}
\newcommand{\schg}{\mathcal{G}}
\newcommand{\schk}{\mathcal{K}}
\newcommand{\schp}{\mathcal{P}}

\newcommand{\schr}{\mathcal{R}}
\newcommand{\schs}{\mathcal{S}}
\newcommand{\scht}{\mathcal{T}}
\newcommand{\schx}{\mathcal{X}}

\newcommand{\sho}{\mathscr{O}}

\newcommand{\shu}{\mathscr{U}}

\DeclareMathOperator{\rad}{rad}
\DeclareMathOperator{\deriver}{R\!}

\DeclareMathOperator{\Cone}{Cone}

\newcommand{\Kum}{\textup{\textsf{K}}}
\newcommand{\Num}{\textup{\textsf{N}}}
\newcommand{\Stwo}{\mathfrak{S}_2}

\DeclareMathOperator{\Aut}{Aut}
\DeclareMathOperator{\End}{End}
\DeclareMathOperator{\Hom}{Hom}
\DeclareMathOperator{\Ext}{Ext}

\DeclareMathOperator{\id}{id}
\DeclareMathOperator{\Gr}{Gr}
\DeclareMathOperator{\Fix}{Fix}
\DeclareMathOperator{\image}{im}
\DeclareMathOperator{\rank}{rank}

\newcommand{\bz}{\mathbb{Z}}

\newcommand{\br}{\mathbb{R}}
\newcommand{\bc}{\mathbb{C}}

\newcommand{\bp}{\mathbb{P}}

\DeclareMathOperator{\HH}{H}
\DeclareMathOperator{\Hoch}{HH}
\newcommand{\ch}{\textup{\textsf{ch}}}
\newcommand{\td}{\textup{\textsf{td}}}

\DeclareMathOperator{\Stab}{Stab}
\newcommand{\GLp}{\operatorname{GL^+}}
\newcommand{\grp}{{\tilde{\GLp}}(2,\br)}

\newcommand{\defi}[1]{{\textbf{\emph{#1}}}}
\newcommand{\cate}[1]{{\textbf{\textup{#1}}}}
\newcommand{\funct}[1]{{\textup{\textbf{\textsf{#1}}}}}

\theoremstyle{plain}
\newtheorem{theorem}{Theorem}[section]
\newtheorem{corollary}[theorem]{Corollary}

\newtheorem{proposition}[theorem]{Proposition}

\theoremstyle{definition}
\newtheorem{definition}[theorem]{Definition}
\newtheorem{remark}[theorem]{Remark}
\newtheorem{example}[theorem]{Example}

\author{Ziqi Liu}
\title[Two indecomposable Enriques categories]
{Two indecomposable Enriques categories}
\address{Dipartimento di Matematica ``F. Enriques'', Università degli Studi di Milano, Via Cesare Saldini 50, 20133 Milano, Italy}
\email{ziqi.liu@unimi.it}

\subjclass[2020]{14F08, 14J28, 14J30, 14J45}

\keywords{K3 surfaces, Enriques categories, admissible subcategories}

\thanks{The author is a member of the INdAM group GNSAGA (2025-2026) and is partly supported by the FIS 3 Starting Grant No. FIS-2024-04715.}

\begin{document}

\begin{abstract}
This note proves that the Kuznetsov component of a quartic double solid or a special Gushel--Mukai threefold does not have non-trivial admissible subcategories.
\end{abstract}

\maketitle
\thispagestyle{plain}

\section{Introduction}
The bounded derived categories of smooth projective varieties have attracted much research attention over the past three decades. This interest soon spreads to their abstract analogues, the noncommutative varieties \cite{Ku22,Or20,Per26}.

A basic problem is to decide whether a noncommutative variety is \emph{indecomposable} i.e.~contains no non-trivial admissible subcategories. It is known that Calabi-Yau categories are indecomposable (see \cite[Proposition 5.5]{Ku19}) and so are the bounded derived categories of some algebraic varieties \cite{Ca24,JL26,KO18,Lin26,LY24,Oka11,Oka23,Pir23,Pir26}.

This note studies the indecomposability for two families of noncommutative curves. They are Kuznetsov components of some Fano threefolds \cite{Ku09} and, at the same time, also Enriques categories over K3 surfaces \cite{Liu26a,Liu26b}.

The first family arises from quartic double solids. One recalls that a quartic double solid $Y$ is a double cover $Y\rightarrow\bp^3$ branched on a quartic K3 surface. Some interesting properties of these categories are studied in \cite{Liu26a,PPZ26}. 

\begin{definition}
Let $Y$ be a quartic double solid, then its \defi{Kuznetsov component} $\schk_Y$ is the triangulated subcategory
$$\schk_Y:=\langle\sho_Y,\sho_Y(1)\rangle^{\perp}\subset\cate{D}^b(Y)$$
where $\sho_Y(1)$ is the inverse image of $\sho_{\bp^3}(1)$.	
\end{definition}

The special Gushel--Mukai threefolds give the second family. A special Gushel--Mukai threefold $Y$ is a double cover $Y\rightarrow W$ of a smooth codimension three linear section $W\subset\Gr(2,5)$ branched over a smooth divisor $S$ in $|\sho_W(2)|$. The geometry of this family is similar to the first one, see \cite{KP18,Liu26a,PPZ26}. 

\begin{definition}
Let $Y$ be a special Gushel--Mukai threefold, then its \defi{Kuznetsov component} $\schg_Y$ is the triangulated subcategory
$$\schg_Y:=\langle\sho_Y,\shu_Y^\vee\rangle^{\perp}\subset\cate{D}^b(Y)$$
where $\shu_Y$ is the tautological vector bundle pulled back from that of $\Gr(2,5)$.
\end{definition}

The main result of this note answers a part of \cite[Remark 2.18]{KP18}.

\begin{theorem}[Theorem \ref{5_main}]\label{1_main}
The Kuznetsov component of a quartic double solid or a special Gushel--Mukai threefold contains no non-trivial admissible subcategories.
\end{theorem}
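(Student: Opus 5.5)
The plan is to exploit the Enriques structure. In both cases the covering involution $\tau$ of $Y$ induces an involution on $\schk_Y$ (resp.\ $\schg_Y$). After twisting by the appropriate autoequivalence, this gives an action of $\bz/2$ on the category. By \cite{Liu26a,Liu26b}, the equivariant category is equivalent to $\cate{D}^b(S,\alpha)$, the derived category of the branch K3 surface $S$, possibly twisted by a Brauer class $\alpha$. Equivalently, $\schk_Y$ is an Enriques category whose ``K3 cover'' $\schk_Y^{\bz/2}$ is a K3 category, hence $2$-Calabi--Yau. The key input is that the Serre functor of $\schk_Y$ is $\tau\circ[2]$ (resp.\ an analogous formula for $\schg_Y$), with $\tau$ an involution.

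Let $\schs\subset\schk_Y$ be an admissible subcategory. First I show that $\schs$ is $\tau$-stable. Since $S_{\schk_Y}=\tau[2]$ and the Serre functor preserves neither the left nor the right orthogonal in general, I instead use the standard argument. If $\schk_Y=\langle \schs,\scht\rangle$ is a semiorthogonal decomposition, then so is $\langle S(\scht),\schs\rangle$, that is, $\langle \tau\scht,\schs\rangle$. Iterating gives $\langle \schs,\scht\rangle$ and $\langle\tau\schs,\tau\scht\rangle$ as decompositions. This alone does not force $\tau\schs=\schs$. For that I would use Hochschild homology. Both families have $\Hoch_\bullet$ concentrated in degrees $\pm1$ and $0$, small, and computable from the Fano threefolds. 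The Hochschild homology is additive over semiorthogonal decompositions. For $\schk_Y$ one has $\Hoch_0=\bc^2$ and $\Hoch_{\pm1}=\bc^{10}$ (genus $10$ intermediate Jacobian). For $\schg_Y$ one has $\Hoch_0=\bc^2$ and $\Hoch_{\pm1}=\bc^{10}$ as well. The numerical Grothendieck group is a rank-two lattice with a negative definite Euler form; its explicit form is given in \cite{Ku09,KP18}.

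Next I restrict the Euler form. The numerical Grothendieck group splits orthogonally (in the semiorthogonal sense) as $\Num(\schs)\oplus\Num(\scht)$, with $\chi(\Num\scht,\Num\schs)=0$. Since the Euler form on $\Num(\schk_Y)$ is negative definite with no isotropic or semiorthogonal splitting, one of the summands, say $\Num(\scht)$, must vanish. I would check this directly. The form is $\begin{pmatrix}-1&-1\\-1&-2\end{pmatrix}$ for the double solid, and for the Gushel--Mukai case it is symmetric negative definite. A nonzero vector $v$ with $\chi(v,w)=0=\chi(w,v)$ for a complementary $w$ contradicts definiteness, since $\chi$ is symmetric here because the Serre functor acts trivially on $\Num$. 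So $\Num(\scht)=0$, and $\scht$ is a \emph{phantom-like} category with $\Hoch_0(\scht)=0$ as well, because the Chern character is surjective in degree $0$ for these categories.

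The main obstacle is ruling out such a quasi-phantom or phantom $\scht$. Here I use the K3 cover. Pass to the equivariant categories: the decomposition induces a decomposition of $\cate{D}^b(S,\alpha)$ once $\scht$ is known to be $\tau$-stable. Stability can be obtained because $S$ preserves $\scht$ up to the mutation above and $\Num$ considerations force $\tau\scht$ to also be numerically trivial; then $\langle\tau\scht,\schs\rangle$ and $\langle\schs,\scht\rangle$ together with uniqueness arguments give $\tau\scht=\scht$. Then $\scht^{\bz/2}$ is an admissible subcategory of the $2$-Calabi--Yau category $\cate{D}^b(S,\alpha)$. By \cite[Proposition 5.5]{Ku19} it is $0$ or everything. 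Since $\scht^{\bz/2}=0$ implies $\scht=0$ (the forgetful functor is faithful on objects and induction is adjoint), and $\scht^{\bz/2}$ cannot be everything, $\scht=0$. The delicate point I expect to spend most effort on is the $\tau$-stability of $\scht$. If the above fails, I would fall back on the Hochschild cohomology argument: $\Hoch^0(\scht)$ injects into the center and compare with the count $\Hoch^\bullet$ of the equivariant cover.
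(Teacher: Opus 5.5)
Your proposal has two genuine gaps.

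\textbf{The rank-$(1,1)$ case.} Your treatment of the case where both pieces have rank one is wrong. Negative definiteness rules out isotropic vectors, not orthogonal splittings. The form $\begin{pmatrix}-1&-1\\-1&-2\end{pmatrix}$ you quote is unimodular and negative definite, hence isometric to $-I_2$. Concretely, $a=(1,0)$ and $b=(-1,1)$ satisfy $\chi(a,a)=\chi(b,b)=-1$ and $\chi(a,b)=\chi(b,a)=0$. So $\Num(\schk_Y)=\bz a\oplus\bz b$ is numerically compatible with a decomposition $\langle\schs,\scht\rangle$ in which both pieces have rank one, and no lattice argument excludes it.

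The paper needs a categorical input here (Proposition~\ref{5_main-rank1} via Corollary~\ref{4_En-rank1-exclusion}). For an admissible $\funct{J}\colon\schr\hookrightarrow\schx$, the composite $\funct{F}=\funct{Forg}\circ\funct{J}\colon\schr\to\cate{D}^b(S)$ satisfies $\funct{R}\funct{F}\simeq\funct{Id}_{\schr}\oplus\funct{S}_{\schr}[-2]$, hence is spherical. If $\Num(\schr)=\bz v$ with $\chi(v,v)<0$, the twist $\funct{T}_{\funct{F}}$ acts on $\Num(S)$ as the reflection in $w=\funct{F}_*v$, where $\langle w,w\rangle=-2\chi(v,v)>0$. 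A reflection in a positive vector reverses the orientation of positive $2$-planes, contradicting Theorem~\ref{2_K3-criterion-we-need}.

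\textbf{The numerically trivial case.} When $\Num(\scht)=0$, the $\tau$-stability of $\scht$ is not a technical step; it is equivalent to the theorem. For any decomposition $\langle\schs,\scht\rangle$ one has $\scht={}^{\perp}\schs$ and $\tau\scht=\funct{S}\scht=\schs^{\perp}$. So $\tau\scht=\scht$ holds exactly when ${}^{\perp}\schs=\schs^{\perp}$, i.e.\ when the decomposition is completely orthogonal. Once that is known, your passage to $\cate{D}^b(S)$ and \cite[Proposition 5.5]{Ku19} works, but it only settles this easy case. Knowing that $\tau\scht$ is also numerically trivial gives nothing, because (quasi-)phantoms are invisible to such invariants. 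There is also no uniqueness statement for semiorthogonal decompositions that would identify $\scht$ with $\schs^{\perp}$. Your fallback via $\Hoch^0$ is too vague to assess.

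The missing idea is to treat the twist of the spherical functor $\funct{F}\colon\scht\to\cate{D}^b(S)$ as an autoequivalence of the K3 surface.
\begin{enumerate}
\item First reduce to $\Hoch^0(\scht)$ having no non-trivial idempotents.
\item Since $\Hoch_\bullet(\scht)$ is concentrated in degrees $\pm1$, $\funct{T}_{\funct{F}}$ acts trivially on $\tilde{\HH}(S,\bz)$.
\item It commutes with $\Pi$, because $\Pi\funct{F}\simeq\funct{F}$.
\item The $\Pi$-fixed stability conditions form a single $\grp$-orbit through a point of $\Stab^{\dag}(S)$. This is where the uniqueness of Serre-invariant stability conditions, Theorem~\ref{2_Serre-invariant}, enters.
\item Hence $\funct{T}_{\funct{F}}\simeq[2m]$ by Proposition~\ref{2_K3-property-Bridgeland-coro}.
\item It follows that $\funct{S}_{\scht}\simeq[2m+1]$, which forces $m\in\{0,-1\}$ and $\funct{F}\funct{R}\simeq\funct{Id}_S\oplus[2m-1]$.
\item Composing with $\Pi$ gives $\Pi\oplus\Pi[2m-1]\simeq\funct{Id}_S\oplus[2m-1]$, which contradicts Krull--Schmidt because $\Pi$ is a non-trivial involution.
\end{enumerate}
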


\begin{remark}\label{1_remark}
These examples are special cases of the Enriques categories over K3 surfaces constructed in \cite{Liu26b}. In Section \ref{SS_4.3}, we explain the construction and record sufficient conditions under which the argument for Theorem \ref{1_main} applies.
\end{remark}

The key technical point of Theorem \ref{1_main} is to relate possible non-trivial admissible subcategories to autoequivalences of K3 surfaces using spherical functors \cite{AL17}. 

\subsection*{Organization}
Section 2 recalls some necessary background. Section 3 provides some new observations. Section 4 develops a framework to detect admissible subcategories of Enriques categories over K3 surfaces using spherical functors. Section 5 shows Theorem \ref{1_main} and discusses Remark \ref{1_remark}.

\subsection*{Conventions}
This note works over the field $\bc$. A noncommutative variety is a smooth proper pretriangulated idempotent-complete dg category. Any subcategory is assumed to be strictly full.

\section{Preliminaries}
This section recalls some general background. Throughout the section, we will fix a non-trivial noncommutative variety $\scht$. Its homotopy category is a non-trivial triangulated category and is, by abuse of notation, also denoted by $\scht$.

The reader who is not familiar with the theory of dg categories can regard $\scht$ as the bounded derived category $\cate{D}^b(X)$ of a smooth projective variety $X$.

\subsection{Some invariants}
The Grothendieck group $\Kum(\scht)$ of $\scht$ is the Grothendieck group of its homotopy triangulated category.

The properness of $\scht$ implies that $\deriver\Hom(E,F)$ is a bounded complex of finite-dimensional vector spaces for any $E$ and $F$. In this case, the Euler form
$$\chi(E,F)=\sum(-1)^n\dim\Hom(E,F[n])$$
is well-defined. It descends to a bilinear pairing $\chi\colon\Kum(\scht)\times\Kum(\scht)\rightarrow\bz$. 

\begin{definition}
The \defi{numerical Grothendieck group} of $\scht$ is the quotient $$\Num(\scht):=\Kum(\scht)/\rad(\chi)$$
where $\rad(\chi):=\{v\in\Kum(\scht)\mid\chi(v,w)=0\text{ for any } w\in\Kum(\scht)\}$.
\end{definition}

\begin{proposition}[{{\cite[Theorem 1.2]{TVdB16}}}]\label{2_N(T)-discrete}
The numerical Grothendieck group $\Num(\scht)$ is a finitely generated free abelian group.
\end{proposition}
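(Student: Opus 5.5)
The plan is to bound $\Num(\scht)$ by embedding it into a lattice $\bz^m$. Freeness is then automatic, and finite generation comes from finite-dimensionality of Hochschild homology together with the noncommutative Hirzebruch--Riemann--Roch theorem.

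First, torsion-freeness is formal. If $n v\in\rad(\chi)$ with $n\neq 0$, then $n\chi(v,w)=0$ for all $w$. Since $\chi$ is $\bz$-valued, this gives $\chi(v,w)=0$, so $v\in\rad(\chi)$. Equivalently, the map
$$\Num(\scht)\longrightarrow\Hom_{\bz}(\Kum(\scht),\bz),\qquad v\mapsto\chi(v,-),$$
is injective by the very definition of $\rad(\chi)$, so $\Num(\scht)$ is a subgroup of a torsion-free group. The remaining issue is finite generation, since $\Hom_{\bz}(\Kum(\scht),\bz)$ is typically huge.

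Second, I use the Chern character $\ch\colon\Kum(\scht)\to\Hoch_0(\scht)$. Since $\scht$ is smooth and proper, $\Hoch_*(\scht)$ is finite-dimensional over $\bc$. Shklyarov's noncommutative Hirzebruch--Riemann--Roch theorem gives a bilinear (Mukai) pairing $\langle-,-\rangle$ on $\Hoch_0(\scht)$ satisfying
$$\chi(E,F)=\langle\ch(E),\ch(F)\rangle$$
for all $E,F\in\scht$. Let $V\subset\Hoch_0(\scht)$ be the $\bc$-span of $\ch(\Kum(\scht))$. Then $V$ is finite-dimensional, so I can pick $w_1,\dots,w_m\in\Kum(\scht)$ with $\ch(w_1),\dots,\ch(w_m)$ a basis of $V$. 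For any $w\in\Kum(\scht)$, write $\ch(w)=\sum c_i\ch(w_i)$ with $c_i\in\bc$. Then
$$\chi(v,w)=\sum_i c_i\,\chi(v,w_i)$$
for all $v$, so the functional $\chi(v,-)$ is determined by the integers $\chi(v,w_1),\dots,\chi(v,w_m)$. Hence $v\mapsto(\chi(v,w_i))_i$ defines an injective homomorphism $\Num(\scht)\hookrightarrow\bz^m$. A subgroup of $\bz^m$ is free of finite rank, which proves the claim.

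The main obstacle is the second step: the existence of a Chern character into finite-dimensional $\Hoch_0$ through which the Euler form factors. Smoothness and properness enter exactly here. Properness makes the pairing defined, and smoothness, i.e.\ compactness of the diagonal bimodule, makes $\Hoch_*$ finite-dimensional and lets one write $\chi$ as a trace of the diagonal. I would simply invoke Shklyarov's theorem rather than reprove it. If one wanted to avoid it, one could argue directly: compactness of the diagonal bimodule means it is built from finitely many objects of the form $E_j\boxtimes F_j$. Consequently $\chi(v,w)$ is a finite integral combination of the products $\chi(v,E_j)\chi(F_j,w)$, which again yields an injection of $\Num(\scht)$ into $\bz^{m}$ with $m$ the number of pieces.
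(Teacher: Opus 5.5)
Your main argument is correct, but it is not the paper's route: the paper gives no argument for this statement and imports it as a black box from Tabuada--Van den Bergh. Your proof is self-contained, modulo Shklyarov's Hirzebruch--Riemann--Roch theorem and finite-dimensionality of $\Hoch_0$. It uses the same mechanism the paper uses later in Proposition~\ref{3_observation-3}.

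Your injection $v\mapsto(\chi(v,w_i))_i$ actually gives more than the statement. It shows $\rank\Num(\scht)\le\dim V\le\dim\Hoch_0(\scht)$, so Proposition~\ref{3_observation-3} follows at once, without first choosing a basis of $\Num(\scht)$.

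The price is that your argument only works in characteristic zero. The identity $\chi(v,w)=\sum_i c_i\chi(v,w_i)$ lets you conclude $\chi(v,w)=0$ because $\bz$ sits faithfully inside $\bc$. Over a field of characteristic $p$ the same reasoning would only give vanishing modulo $p$. This costs nothing here, since the paper works over $\bc$.

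A small precision: in Shklyarov's formula the first argument enters through its dual, in $\Hoch_0$ of the opposite category. The paper writes this as $\chi(\kappa,x)=\langle\ch(x),\ch(\kappa)^\vee\rangle$, not as $\langle\ch(E),\ch(F)\rangle$ on $\Hoch_0(\scht)$. You only use that $\chi(v,w)$ depends $\bc$-linearly on $\ch(w)$, so nothing changes.

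The ``direct'' alternative in your last paragraph, however, does not work as written. Smoothness only places the diagonal bimodule in the \emph{thick} subcategory generated by external products. Thick closure allows direct summands, so the class of the diagonal in $\Kum$ of bimodules need not be an integral, or even rational, combination of classes $[E_j\boxtimes F_j]$.

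A concrete counterexample is a smooth projective curve $C$ of genus $g\ge1$. There $[\sho_\Delta]\in\Kum(C\times C)_{\mathbb Q}$ is not in the span of external products. Indeed, $\ch(E\boxtimes F)$ has no component in $\HH^1(C)\otimes\HH^1(C)$. The diagonal class does have one, since its self-intersection is $2-2g$ rather than the value $2$ of $[\mathrm{pt}\times C]+[C\times\mathrm{pt}]$.

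So your step ``consequently $\chi(v,w)$ is a finite integral combination of $\chi(v,E_j)\chi(F_j,w)$'' is unjustified. Drop that aside and rely on the Hochschild homology argument.
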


The $i$-th \emph{Hochschild cohomology} of $\scht$ is $\Hoch^i(\scht)=\Hom(\funct{Id}_{\scht},\funct{Id}_{\scht}[i])$. The space $\Hoch^i(\scht)$ is finite dimensional and $\Hoch^{\bullet}(\scht)$ is bounded, see e.g.~\cite[Lemma 2.6]{BP23}.

To describe the Hochschild homology, we need the notion of a Serre functor. 

\begin{definition}
An autoequivalence $\funct{S}$ of $\scht$ is called a \defi{Serre functor} if it induces a functorial isomorphism $\Hom(E,\funct{S}F)\cong\Hom(F,E)^\vee$ of complex vector spaces for any two objects $E$ and $F$ of $\scht$.
\end{definition}

According to \cite[Corollary 3.5]{BK90}, there exists a unique Serre functor $\funct{S}_{\scht}$ for the noncommutative variety $\scht$. In this case, the $i$-th \emph{Hochschild homology} of $\scht$ has the form $\Hoch_i(\scht)=\Hom(\funct{Id}_{\scht},\funct{S}_{\scht}[-i])$. The complex $\Hoch_{\bullet}(\scht)$ is also a bounded complex of finite-dimensional vector spaces, see e.g.~\cite[Example 5.6]{CT14}.

The Hochschild homology and cohomology can be related when there exists an isomorphism between the Serre functor and a shift.

\begin{definition}
Let $n$ be an integer, then a noncommutative variety $\scht$ is called $n$-\defi{Calabi-Yau} if $\Hoch^0(\scht)=\bc$ and there exists an isomorphism $\funct{S}_{\scht}\stackrel{\sim}{\rightarrow}[n]$.
\end{definition}

\begin{remark}
In this case, one has $\Hoch^i(\scht)\cong\Hoch_{n-i}(\scht)$ by definitions.
\end{remark}

\subsection{Admissible subcategories}
The definition of an admissible subcategory of a triangulated category was introduced in \cite{BK90}, which provides a natural way to slice triangulated categories. A modern exposition is \cite[Chapter 7]{HuCu}.

\begin{definition}
A triangulated subcategory $\scht_0\subset\scht$ is called \defi{admissible} if the inclusion functor has a right adjoint and a left adjoint. An admissible subcategory $\scht_0\subset\scht$ is said to be \defi{non-trivial} if $\scht_0$ is neither zero nor $\scht$.
\end{definition}

According to \cite[Section 3]{Or18}, an admissible subcategory $\scht_0\subset\scht$ corresponds to a noncommutative subvariety of $\scht$. Conversely, a dg subcategory $\scht_0$ of the noncommutative variety $\scht$ induces an admissible subcategory at the level of homotopy categories if it is a noncommutative subvariety, see \cite[Proposition 1.6]{Or20}.

\begin{definition}
A noncommutative variety is \defi{indecomposable} if its homotopy category does not have any non-trivial admissible subcategory.
\end{definition}

The right orthogonal subcategory
$$\scht_0^{\perp}=\{E\in\scht\,|\,\Hom(X,E)=0\textup{ for any }X\in\scht_0\}$$ 
of an admissible subcategory $\scht_0\subset\scht$ is also an admissible subcategory. Hence $\scht$ is decomposed into two pieces. It can be generalized into the following notion.

\begin{definition}
A \defi{semiorthogonal decomposition} of the triangulated category $\scht$ consists of admissible subcategories $\scht_1,\dots,\scht_n$ of $\scht$ satisfying
\begin{itemize}
	\item[(a)] $\Hom(\scht_i,\scht_j) = 0$ for $i > j$;
	\item[(b)] for each non-zero object $E\in\scht$ there exists a sequence
	of exact triangles
	\begin{displaymath}
		0= \xymatrix@R=2em{
			E_n\ar[r]&E_{n-1}\ar[r]\ar[d]&E_{n-2}\ar[r]\ar[d]&\cdots\ar[r]&E_1\ar[r]\ar[d]&E_0\ar[d]\\
			&A_n\ar@{-->}[ul]&A_{n-1}\ar@{-->}[ul]&\ar@{-->}[ul]\cdots&\cdots&A_1\ar@{-->}[ul]
		} =E
	\end{displaymath}
	with $A_i:=\Cone(E_i\rightarrow E_{i-1})\in\scht_i$.
\end{itemize}
In this case, we write $\scht=\langle\scht_1,\dots,\scht_n\rangle$. A semiorthogonal decomposition is said to be \defi{non-trivial} if one of the admissible subcategories $\scht_1,\dots,\scht_n$ is non-trivial.
\end{definition}

Semiorthogonal decompositions are compatible with many invariants, such as the numerical Grothendieck group and Hochschild homology.

\begin{proposition}[{{\cite[Exercise 7.1.2]{HuCu}}}]\label{2_Kum-decomposition}
Consider a semiorthogonal decomposition $$\scht=\langle\scht_1,\dots,\scht_n\rangle$$ 
then the inclusion $\scht_i\hookrightarrow\scht$ induces a direct sum $\Kum(\scht)=\Kum(\scht_1)\oplus\cdots\oplus\Kum(\scht_n)$.
\end{proposition}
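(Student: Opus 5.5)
The statement is that for a semiorthogonal decomposition $\scht=\langle\scht_1,\dots,\scht_n\rangle$ the inclusions induce $\Kum(\scht)=\bigoplus_i\Kum(\scht_i)$. I would reduce to the case $n=2$ by induction. By definition each $\scht_i$ is admissible, and $\langle\scht_1,\dots,\scht_{n-1}\rangle$ (the smallest triangulated subcategory containing $\scht_1,\dots,\scht_{n-1}$) is admissible as well. Then $\scht=\langle\scht',\scht_n\rangle$ with $\scht'=\langle\scht_1,\dots,\scht_{n-1}\rangle$, and the inductive hypothesis applies to $\scht'$. So the core case is $\scht=\langle\scht_1,\scht_2\rangle$, with $\Hom(\scht_2,\scht_1)=0$.

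In this case, with the indexing of the definition, every $E$ sits in a triangle $E_1\to E\to A_1\to E_1[1]$ with $A_1\in\scht_1$ and $E_1=A_2\in\scht_2$. The vanishing $\Hom(\scht_2,\scht_1)=0$ makes this triangle functorial and unique up to unique isomorphism. In fact $E\mapsto A_1$ is the left adjoint $\lambda_1$ of the inclusion of $\scht_1$, and $E\mapsto E_1$ is the right adjoint $\rho_2$ of the inclusion of $\scht_2$. Both are exact functors, because adjoints of exact functors between triangulated categories are exact. They therefore induce homomorphisms $\Kum(\scht)\to\Kum(\scht_1)$ and $\Kum(\scht)\to\Kum(\scht_2)$.

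Next I would compose these with the map $\Kum(\scht_1)\oplus\Kum(\scht_2)\to\Kum(\scht)$ induced by the inclusions, checking both orders. The triangle gives $[E]=[\rho_2E]+[\lambda_1E]$ in $\Kum(\scht)$, so going from $\Kum(\scht)$ to the sum and back is the identity. For the other order, note that $\lambda_1$ restricts to the identity on $\scht_1$ and kills $\scht_2$, since $\Hom(\scht_2,\scht_1)=0$. Likewise $\rho_2$ is the identity on $\scht_2$ and kills $\scht_1$. Hence going from the sum to $\Kum(\scht)$ and back is also the identity, and $\Kum(\scht)=\Kum(\scht_1)\oplus\Kum(\scht_2)$.

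The one point that needs care is that these maps are well defined on Grothendieck groups. Each relation $[B]=[A]+[C]$ coming from a triangle in $\scht$ must map to a relation in $\Kum(\scht_i)$. This holds because $\lambda_1$ and $\rho_2$ send distinguished triangles to distinguished triangles, which is the exactness of adjoints noted above. Everything else is formal.
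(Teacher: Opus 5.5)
Your argument is correct and is the standard solution to the exercise the paper cites. The paper gives no proof of its own, only the reference \cite[Exercise 7.1.2]{HuCu}. Two points in your inductive step are left implicit but are routine: the octahedral axiom produces the triangle $E_{n-1}\to E\to C$ with $C\in\scht'$, and $\scht'$ coincides with $\scht_n^{\perp}$, so each of its objects has a filtration with $E_{n-1}=0$ and hence inherits the decomposition $\langle\scht_1,\dots,\scht_{n-1}\rangle$.
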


\begin{corollary}\label{2_Num-decomposition}
Consider a semiorthogonal decomposition $$\scht=\langle\scht_1,\dots,\scht_n\rangle$$ 
then the inclusion $\scht_i\hookrightarrow\scht$ induces a direct sum $\Num(\scht)=\Num(\scht_1)\oplus\cdots\oplus\Num(\scht_n)$.

\begin{proof}
It follows directly from Proposition \ref{2_Kum-decomposition} and the definitions.
\end{proof}
\end{corollary}

\begin{proposition}[{{\cite[Example 5.6]{CT14}}}]\label{2_Hoch-decomposition}
Consider a semiorthogonal decomposition $$\scht=\langle\scht_1,\dots,\scht_n\rangle$$ 
then there exists a decomposition $\Hoch_{\bullet}(\scht)=\Hoch_{\bullet}(\scht_1)\oplus\cdots\oplus\Hoch_{\bullet}(\scht_n)$.	
\end{proposition}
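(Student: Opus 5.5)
The statement to prove is Proposition \ref{2_Hoch-decomposition}: Hochschild homology is additive for a semiorthogonal decomposition $\scht=\langle\scht_1,\dots,\scht_n\rangle$. I would argue by induction on $n$. The case $n=1$ is trivial. For $n\ge 3$ one groups $\langle\scht_2,\dots,\scht_n\rangle$ into a single admissible subcategory and applies the inductive hypothesis. So everything reduces to $n=2$, with $\scht=\langle\scht_1,\scht_2\rangle$.

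For two components I would work at the dg level, where $\scht$ is smooth and proper. By Orlov's results quoted above, $\scht$ is then equivalent to a gluing of $\scht_1$ and $\scht_2$ along a perfect bimodule $\varphi$, namely the one given by the functor $\scht_1\to\scht_2$ (or rather its $\Hom$-bimodule). The Hochschild chain complex can be computed as $\funct{Id}_{\scht}\otimes^{\mathbf L}_{\scht\otimes\scht^{op}}\scht$. The diagonal bimodule of a gluing sits in a triangle built from the diagonals of $\scht_1$ and $\scht_2$ together with $\varphi$. Equivalently, $\scht$ is the upper triangular dg category $\begin{pmatrix}\scht_1&\varphi\\0&\scht_2\end{pmatrix}$.

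The key step is to show that the $\varphi$-contribution vanishes in Hochschild homology. Hochschild chains are cyclic tensors $a_0\otimes a_1\otimes\cdots\otimes a_k$ of composable morphisms forming a cycle. In the upper triangular category, any cycle passing through an off-diagonal $\varphi$ term would need a morphism going back from $\scht_2$ to $\scht_1$, and such morphisms are zero by semiorthogonality. Hence the Hochschild complex of the triangular category is literally the direct sum of the complexes of $\scht_1$ and $\scht_2$. This is Keller's theorem that Hochschild homology is additive, i.e.\ a localizing invariant.

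The main obstacle is making this rigorous for the dg enhancement rather than for a naive triangulated category. I would use that $\Hoch_\bullet$ is invariant under Morita equivalence and apply Keller's additivity for the short exact sequence of dg categories $\scht_1\to\scht\to\scht/\scht_1\simeq\scht_2$, which is split by the adjoints. Additivity then gives a direct sum rather than merely a long exact sequence. Finally, I would identify this dg Hochschild homology with the description $\Hom(\funct{Id}_{\scht},\funct{S}_{\scht}[-i])$ used in the paper. This works because the Serre functor's kernel is the inverse dualizing bimodule, the inverse of the one representing $\funct{S}^{-1}$. The compatibility is standard, as in \cite{CT14}.
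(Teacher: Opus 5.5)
Your proposal is correct and is essentially the argument behind the paper's citation of \cite[Example 5.6]{CT14}. Hochschild homology is an additive (indeed localizing) invariant of dg categories, so the split exact sequence $\scht_1\to\scht\to\scht/\scht_1\simeq\scht_2$, or equivalently the upper-triangular gluing model, gives the direct sum, which is then transported to the description $\Hom(\funct{Id}_{\scht},\funct{S}_{\scht}[-i])$.

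One terminological slip: the inverse dualizing bimodule represents $\funct{S}_{\scht}^{-1}$, while $\funct{S}_{\scht}$ itself is represented by the linear dual $\Hom_{\bc}(\scht,\bc)$ of the diagonal bimodule. It is through this dual bimodule and tensor--Hom adjunction that $\Hom(\funct{Id}_{\scht},\funct{S}_{\scht}[-i])$ is computed from the Hochschild complex.
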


A similar statement does not hold for the Hochschild cohomology. Instead, it is compatible with orthogonal decomposition.

\subsection{Spherical functors}
The key tool of this note is the theory of spherical functors introduced by Anno and Logvinenko, see \cite{Add16,AL17,Se18} for details. Also, we refer the reader to \cite[Section 2.4]{Ku19} and \cite[Section 2]{KP21} for an alternative treatment.

Let $\funct{F}\colon\scht_1\rightarrow\scht_2$ be a dg functor of noncommutative varieties with a right adjoint functor $\funct{R}$ and a left adjoint functor $\funct{L}$, then its \emph{twist} is defined by
$$\funct{T}_{\funct{F}}:=\Cone(\funct{F}\funct{R}\rightarrow\funct{Id}_{\scht_2})$$
where the map inside is the counit of this adjunction, and its \emph{cotwist} is defined by
$$\funct{C}_{\funct{F}}:=\Cone(\funct{Id}_{\scht_1}\stackrel{\eta}{\rightarrow}\funct{R}\funct{F})[-1]$$
where the map inside is the unit of this adjunction.

\begin{definition}\label{2_spherical-functor-definition}
Suppose that any two of the following conditions hold:
\begin{itemize}
	\item[(a)] one has an isomorphism $\funct{R}\simeq\funct{L}\circ\funct{T}_{\funct{F}}[-1]$;
	\item[(b)] one has an isomorphism $\funct{R}\simeq\funct{C}_{\funct{F}}\circ\funct{L}[1]$;	
	\item[(c)] the twist $\funct{T}_{\funct{F}}$ is an autoequivalence of $\scht_2$;
	\item[(d)] the cotwist $\funct{C}_{\funct{F}}$ is an autoequivalence of $\scht_1$;
\end{itemize}
then a dg functor $\funct{F}\colon\scht_1\rightarrow\scht_2$ of noncommutative varieties is called \defi{spherical}.
\end{definition}
\begin{remark}
In fact, if any two of these conditions hold, then the other two hold automatically \cite{AL17} (see also \cite{Add16,Mea21}).
\end{remark}

This notion generalizes the spherical objects, see e.g.~\cite[Chapter 8]{Hu}. 

\begin{definition}
An object $E\in\scht$ is called $d$-\defi{spherical} if it satisfies $$\deriver\Hom(E,E)=\bc[0]\oplus\bc[-d]$$ 
and $\funct{S}_{\scht}(E)\simeq E[d]$.
\end{definition}
\begin{remark}
A $d$-spherical object is simply called a \emph{spherical} object if the noncommutative variety $\scht$ is a $d$-Calabi-Yau category.
\end{remark}

The associated \emph{spherical twist} $\funct{T}_E$ of a $d$-spherical object $E$ is defined by 
$$F\mapsto \Cone(\deriver\Hom(E,F)\otimes E\rightarrow F)$$ 
where the map inside is the evaluation map.

\begin{example}[{{\cite[Example 3.5]{AL16}}}]
Let $X$ be a $d$-dimensional Calabi-Yau manifold and $E$ be a spherical object in $\cate{D}^b(X)$, then the functor
$$\funct{O}_E:\cate{D}^b(\bc)\rightarrow\cate{D}^b(X),\quad K\mapsto K\otimes E$$
is a spherical functor. The associated twist $\funct{T}_{\funct{O}_E}$ is exactly the spherical twist $\funct{T}_E$.
\end{example}

\subsection{Equivariant categories}
To apply spherical functors to our case, we need to use equivariant categories. Here we refer to \cite{BP23,Ela15,Plo07} for more details.

Let $G$ be a finite abelian group, and assume that $G$ acts on the noncommutative variety $\scht$. Then one has the \emph{equivariant category} $\scht_G$ with respect to the $G$-action whose objects and morphisms are respectively the $G$-equivariant objects and $G$-equivariant morphisms. The new category $\scht_G$ is also a noncommutative variety, see for example \cite[Proposition 3.15]{BP23} and \cite[Corollary 8.10]{Ela15}.

Moreover, there exists a natural action of the dual group $G^\vee:=\Hom(G,\bc^\times)$ on $\scht_G$ induced by a $G$-action on $\scht$. 

\begin{theorem}[{{\cite[Theorem 4.2]{Ela15}}}]\label{2_dual-dual-reconstruction}
There exists an equivalence $\funct{X}\colon(\scht_G)_{G^\vee}\cong\scht$ such that the $G$-action is identified with the $(G^\vee)^\vee$-action by $(G^\vee)^\vee\cong G$.
\end{theorem}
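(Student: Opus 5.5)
The plan is to write down an explicit functor $\funct{X}^{-1}\colon\scht\rightarrow(\scht_G)_{G^\vee}$ and check that it is an equivalence, working at the level of homotopy categories. Throughout I use that $\bc$ has characteristic zero, that $G$ is finite abelian, and that $\scht$ is idempotent complete. For an object $E\in\scht$ I take the induced object $\funct{Ind}(E):=\bigoplus_{g\in G}g^*E$. It carries the obvious $G$-linearization, which permutes the summands using the structure isomorphisms $h^*g^*\cong (gh)^*$ of the action. Recall that $\chi\in G^\vee$ acts on $\scht_G$ by twisting the linearization by $\chi$. On $\funct{Ind}(E)$ there is a canonical isomorphism $\funct{Ind}(E)\stackrel{\sim}{\rightarrow}\funct{Ind}(E)\otimes\chi$, given by multiplication by $\chi(g)$ on the summand $g^*E$. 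These isomorphisms satisfy the cocycle condition because $\chi\mapsto\chi(g)$ is multiplicative, so they make $\funct{Ind}(E)$ a $G^\vee$-equivariant object of $\scht_G$. Morphisms in $\scht$ clearly induce equivariant morphisms, which gives the functor.

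Full faithfulness comes from an adjunction and a character computation. The forgetful functor $\scht_G\rightarrow\scht$ is both left and right adjoint to $\funct{Ind}$, since $G$ is finite. Hence
$$\Hom_{\scht_G}(\funct{Ind}E,\funct{Ind}F)\cong\bigoplus_{g\in G}\Hom_{\scht}(E,g^*F).$$
The morphisms in $(\scht_G)_{G^\vee}$ are the $G^\vee$-invariants of this space, and $\chi$ acts on the $g$-summand by the scalar $\chi(g)$. By orthogonality of characters, which uses that $G$ is abelian and that $|G|$ is invertible, the invariants are exactly the summand with $g=e$. This summand is $\Hom_{\scht}(E,F)$, and the same argument applied to shifts shows that the functor is fully faithful.

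For essential surjectivity, start with an object $(A,\{\psi_\chi\})$ of $(\scht_G)_{G^\vee}$, and let $\bar A\in\scht$ be its underlying object. Each $\psi_\chi\colon A\rightarrow A\otimes\chi$ gives an endomorphism of $\bar A$ in $\scht$. For each $g\in G$ I form the averaged operator
$$e_g=\frac{1}{|G|}\sum_{\chi\in G^\vee}\chi(g)^{-1}\psi_\chi.$$
The cocycle condition on $\psi$ should make the $e_g$ orthogonal idempotents summing to the identity. Since $\scht$ is idempotent complete, $\bar A\cong\bigoplus_g e_g\bar A$. The $G$-linearization of $A$ maps $h^*(e_g\bar A)$ isomorphically onto $e_{gh^{-1}}\bar A$, or onto $e_{gh}\bar A$ depending on conventions. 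Setting $E:=e_e\bar A$, this yields an isomorphism $A\cong\funct{Ind}(E)$, and one then checks that it is compatible with the $G^\vee$-structures. Finally, the $G$-action on $\scht$ has to be matched with the $(G^\vee)^\vee$-action. Here there is a natural isomorphism $\funct{Ind}(h^*E)\cong\funct{Ind}(E)$ in $\scht_G$, given by reindexing the summands. Under it the $G^\vee$-structure is twisted by the evaluation character $\chi\mapsto\chi(h)$, which is precisely the action of $h\in G\cong(G^\vee)^\vee$.

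The step I expect to be the main obstacle is essential surjectivity. The delicate part is the bookkeeping that verifies the $e_g$ are idempotents permuted by the linearization. There is also a subtler issue: the equivariant category has to be taken at the dg level, or defined via an enhancement, for the resulting category to be triangulated. I would handle this by citing that $\scht_G$ is again a noncommutative variety and checking the equivalence on homotopy categories, where the construction above lives. I would first try to reduce everything to statements about the comonad $\funct{Forget}\circ\funct{Ind}$ on $\scht$ and use the Barr--Beck-type description of equivariant categories as comodules. In that language the decomposition by the $e_g$ is the decomposition of the comonad $\bigoplus_g g^*$ under the dual group.
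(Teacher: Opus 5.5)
Your proposal is correct, but it is not the route behind the citation. The paper gives no proof of its own and quotes \cite[Theorem 4.2]{Ela15}, where the statement comes from comonadic descent. This is essentially the reformulation you mention at the end, and it runs in three steps. First, $\scht_G$ is the category of comodules over $\bigoplus_g g^*$ on $\scht$. Second, the comonad $\funct{Inf}\circ\funct{Forg}$ on $\scht_G$ is isomorphic to $\bigoplus_{\chi\in G^\vee}\chi\otimes(-)$, whose comodules form $(\scht_G)_{G^\vee}$. Third, the comparison functor $\scht\to(\scht_G)_{G^\vee}$ is an equivalence, because $\scht$ is idempotent complete and the unit $\funct{Id}_{\scht}\to\funct{Forg}\circ\funct{Inf}=\bigoplus_g g^*$ is split by projection onto the summand $g=e$.

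Your argument unpacks this. The Hom computation is the full faithfulness of the comparison functor. The idempotents $e_g$, together with idempotent completeness, are exactly the essential-surjectivity step of the descent theorem. Division by $|G|$ in $e_g$ plays the role of the decomposition $\funct{Inf}\circ\funct{Forg}\cong\bigoplus_\chi\chi\otimes(-)$ of the regular representation; that is where characteristic zero enters in both proofs.

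The two routes buy different things.
\begin{itemize}
\item The comonadic version is shorter and hides the convention bookkeeping. The signs $\chi(g)^{\pm1}$, the cocycle checks, and the $G$-equivariance of $\funct{X}$ all become statements about morphisms of comonads.
\item Your version is explicit. $\funct{X}^{-1}$ is $\funct{Inf}$ equipped with the character-scalar $G^\vee$-structure, and $\funct{X}$ extracts the $G^\vee$-invariant summand $\image(e_e)$ of the underlying object. In particular, Proposition \ref{2_dual-dual-forgetful} becomes a tautology in your setup.
\end{itemize}

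When you write it out, tighten two points.
\begin{itemize}
\item Obtain the isomorphism $\funct{Ind}(E)\cong A$ in $\scht_G$ as the morphism adjoint to the inclusion $E\hookrightarrow\bar A$. It is $G$-equivariant for free. It is invertible because, with your conventions, it maps $h^*E$ onto the summand $\image(e_{h^{-1}})$.
\item Decide whether $\chi$ acts on the summand $g^*E$ of $\funct{Ind}(E)$ by $\chi(g)$ or by $\chi(g)^{-1}$ by requiring that $\psi_\chi$ land in the $\chi$-twisted linearization. The structure transported from $A$ satisfies this requirement automatically.
\end{itemize}
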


There are mutually left and right adjoint functors for the equivariant category construction: the \emph{forgetful functor} $\funct{Forg}\colon\scht_G\rightarrow\scht$ defined by forgetting the $G$-equivariant structures, and the \emph{inflation functor} $\funct{Inf}\colon\scht\rightarrow\scht_G$ given on objects by sending $A$ to $\bigoplus_{g \in G} g_*A$ endowed with the natural $G$-equivariant structure. 

\begin{proposition}[{{\cite[Lemma 2.11]{PPZ26}}}]\label{2_dual-dual-forgetful}
The forgetful functor $(\scht_G)_{G^\vee}\rightarrow\scht_G$ for the induced $G^\vee$-action on $\scht_G$ is isomorphic to the inflation functor $\scht\rightarrow\scht_G$ for the $G$-action on $\scht$ through the equivalence in Theorem \ref{2_dual-dual-reconstruction}.
\end{proposition}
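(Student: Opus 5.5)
The plan is to construct an explicit quasi-inverse $\Phi\colon\scht\rightarrow(\scht_G)_{G^\vee}$ of the equivalence $\funct{X}$ of Theorem \ref{2_dual-dual-reconstruction}, and to arrange that $\Phi$ lifts $\funct{Inf}$ by construction. Then $\funct{Forg}\circ\Phi=\funct{Inf}$, and composing with $\funct{X}$ gives the claim.

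\emph{Step 1: the $G^\vee$-equivariant structure.} Recall that $\chi\in G^\vee$ acts on $\scht_G$ by twisting: an equivariant object $(E,\{\theta_g\})$ is sent to $(E,\{\chi(g)\theta_g\})$. For $A\in\scht$, the inflation $\funct{Inf}(A)=\bigoplus_{g\in G}g_*A$ carries the structure maps $\theta_h\colon h_*\bigoplus_g g_*A\rightarrow\bigoplus_g g_*A$, which permute the summands. I will define
$$\varphi_\chi\colon\chi\cdot\funct{Inf}(A)\stackrel{\sim}{\longrightarrow}\funct{Inf}(A)$$
as multiplication by $\chi(g)$ (or $\chi(g)^{-1}$, depending on conventions) on the summand $g_*A$. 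Two checks are needed: that $\varphi_\chi$ intertwines $\chi(h)\theta_h$ with $\theta_h$, which uses $\chi(hg)=\chi(h)\chi(g)$; and the cocycle condition $\varphi_{\chi\psi}=\varphi_\chi\circ\chi(\varphi_\psi)$. Both are routine. Since all maps are scalars on summands, the construction is natural in $A$ and makes sense at the dg level. This defines $\Phi$ with $\funct{Forg}\circ\Phi=\funct{Inf}$ on the nose.

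\emph{Step 2: $\Phi$ is an equivalence.} For full faithfulness, compute
$$\Hom_{(\scht_G)_{G^\vee}}(\Phi A,\Phi B)=\Hom_{\scht_G}(\funct{Inf}A,\funct{Inf}B)^{G^\vee}.$$
By adjunction, $\Hom_{\scht_G}(\funct{Inf}A,\funct{Inf}B)\cong\bigoplus_{g}\Hom_{\scht}(A,g_*B)$, and $\chi$ acts on the $g$-summand by the character $\chi(g)$. Hence the invariants are exactly the $g=e$ summand, namely $\Hom(A,B)$. For essential surjectivity, I will use that every object $E$ of an equivariant category is a direct summand of $\funct{Inf}\,\funct{Forg}E$ (as $|G|$ is invertible). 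Applying this twice shows that every object of $(\scht_G)_{G^\vee}$ is a summand of some $\Phi(A)$. Since $\Phi$ is fully faithful and $\scht$ is idempotent complete, the image is closed under summands, so $\Phi$ is essentially surjective.

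\emph{Step 3: compatibility with $\funct{X}$.} It remains to check that $\Phi$ agrees with (a quasi-inverse of) Elagin's $\funct{X}$ and matches the $G$-action on $\scht$ with the $(G^\vee)^\vee$-action. On $\Phi A$, the element $g\in G$, viewed as a character of $G^\vee$, twists $\varphi_\chi$ by $\chi(g)$. The shift-of-summands isomorphism $\funct{Inf}(g_*A)\cong\funct{Inf}(A)$ converts this twist into $\Phi(g_*A)$, so $\Phi$ intertwines the two actions.

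I expect this step to be the main obstacle. It requires unwinding Elagin's construction in \cite{Ela15}, which is phrased via comonads (descent data for $\funct{Forg}\circ\funct{Inf}$), and seeing that his functor is exactly $A\mapsto(\funct{Inf}A,\varphi)$. I would first try to sidestep this by a uniqueness argument. Both $\Phi$ and $\funct{X}^{-1}$ are equivalences compatible with the actions, and they differ by an autoequivalence of $\scht$ commuting with $\funct{Forg}\circ(-)$ up to the identification. Comparing the monads $\funct{Forg}\,\funct{Inf}$ on both sides should then identify the two functors up to isomorphism, which suffices for the statement.
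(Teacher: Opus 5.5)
The paper gives no argument of its own here; it just quotes \cite[Lemma 2.11]{PPZ26}, and the natural proof is to unwind Elagin's construction of $\funct{X}$. Your Steps 1 and 2 are a correct, more hands-on alternative. The scalar linearization $\varphi_\chi$ is right, with the sign chosen so that it intertwines $\chi(h)\theta_h$ with $\theta_h$. So is the computation $\Hom(\funct{Inf}A,\funct{Inf}B)^{G^\vee}=\Hom(A,B)$. Together they give a direct proof that $A\mapsto(\funct{Inf}A,\varphi)$ is an equivalence.

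There is one ordering issue in Step 2. Applying the summand lemma twice only shows that every object of $(\scht_G)_{G^\vee}$ is a summand of $\funct{Inf}\,\funct{Forg}\,\Phi(A)\simeq\bigoplus_{g\in G}\Phi(A)\otimes g$. Here the functors are those for $G^\vee$, and $\Phi(A)\otimes g$ is the twist by the character $\chi\mapsto\chi(g)$. To see that this lies in the image of $\Phi$, you need $\Phi(A)\otimes g\simeq\Phi(g_*A)$, which is the computation you postpone to Step 3. It has to come first.

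The genuine gap is the identification with $\funct{X}$ in Step 3, and your uniqueness argument cannot close it. The only property of $\funct{X}$ recorded in Theorem \ref{2_dual-dual-reconstruction} is that it matches the $G$-action with the $(G^\vee)^\vee$-action. This does not determine $\funct{Forg}\circ\funct{X}^{-1}$. Since the shift commutes with every $g_*$, the equivalence $[-1]\circ\Phi^{-1}$ matches the actions equally well. Yet under it the forgetful functor corresponds to $\funct{Inf}[1]\not\simeq\funct{Inf}$, because an isomorphism $E\simeq E[1]$ with $E\neq0$ contradicts properness. Your sentence that $\Phi$ and $\funct{X}^{-1}$ ``differ by an autoequivalence commuting with $\funct{Forg}\circ(-)$'' is exactly what has to be proved.

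The missing idea is to use how $\funct{X}$ is built; once you do, Step 3 is tautological. In \cite{Ela15} the equivalence comes from comonadic (Barr--Beck) descent along $\funct{Inf}\colon\scht\to\scht_G$. One identifies the comonad $\funct{Inf}\circ\funct{Forg}$ on $\scht_G$ with $\bigoplus_{\chi\in G^\vee}\chi\otimes(-)$, whose comodules are precisely the $G^\vee$-equivariant objects. Thus $\funct{X}^{-1}$ is the comparison functor $A\mapsto(\funct{Inf}A,\funct{Inf}(\eta_A))$. Its composite with the forgetful functor is $\funct{Inf}$ by definition. Rewriting the coaction $\funct{Inf}(\eta_A)$ as a $G^\vee$-linearization gives exactly your $\varphi_\chi$, so $\funct{X}^{-1}\simeq\Phi$, and your Step 2 re-proves the comonadicity.

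If you are content with \emph{some} equivalence, you could instead declare $\funct{X}:=\Phi^{-1}$, since Steps 1--3 show it matches the actions. But the statement refers to the equivalence of Theorem \ref{2_dual-dual-reconstruction}, so the comparison above is still needed.
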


\subsection{Enriques categories}
The main objects in this note are Enriques categories over K3 surfaces. It was introduced in \cite{BP23} and developed in \cite{Liu26a,PPZ26}. 

\begin{definition}
An \defi{Enriques category} $\sche$ is a noncommutative variety such that the autoequivalence $\amalg:=\funct{S}_{\sche}[-2]$ is an involution and generates an $\Stwo$-action on $\sche$ such that the equivariant category $\schc:=\sche_{\Stwo}$ is $2$-Calabi--Yau.
\end{definition}

\begin{remark}
Theorem \ref{2_dual-dual-reconstruction} provides an equivalence $\schc_{\Stwo^\vee}\simeq\sche$ and we denote a generator of $\Stwo^\vee$ by $\Pi$. Then $\Pi$ is an involution on the 2-Calabi-Yau category $\schc$.
\end{remark}

An Enriques category is said to be \emph{over a K3 surface} if the 2-Calabi-Yau category $\schc$ is equivalent to $\cate{D}^b(S)$ for some K3 surface $S$. 

\begin{example}
The bounded derived category $\cate{D}^b(T)$ of an Enriques surface $T$ is an Enriques category over a K3 surface. 
\end{example}

The example above has non-trivial admissible subcategories \cite{Zu97}, so we do not expect all Enriques categories to be indecomposable. Instead, we focus on those sharing some properties with smooth projective curves.

\begin{example}[\cite{KP17,PPZ26}]\label{2_EC-example1}
Let $Y$ be a quartic double solid and $S\subset\bp^3$ be the associated K3 surface, then the Kuznetsov component $\schk_Y$ is an Enriques category and the associated $2$-Calabi-Yau category is equivalent to $\cate{D}^b(S)$.
\end{example}

\begin{example}[\cite{KP17,PPZ26}]\label{2_EC-example2}
Let $Y$ be a special Gushel--Mukai threefold, then the associated smooth divisor $S$ in $W\subset\Gr(2,5)$ is a K3 surface and $\schg_Y$ is an Enriques category whose associated $2$-Calabi-Yau category is equivalent to $\cate{D}^b(S)$.
\end{example}

An important curve-like property of these two Enriques categories is that their numerical Grothendieck groups have rank two, see Proposition \ref{5_numerical}.

\subsection{Stability conditions}
The last ingredients are the autoequivalences on K3 surfaces. To exploit them, it is inevitable to involve stability conditions. 

The notion of a stability condition, introduced in \cite{Bri07}, is defined on the level of triangulated categories. In this note, it suffices to consider the \emph{numerical} ones. 

\begin{definition}\label{2_stab-definition}
A \defi{(numerical) stability condition} on $\scht$ is a pair $\sigma=(\schp,Z)$ where the \emph{central charge} $Z$ is a group homomorphism $Z\colon\Num(\scht)\rightarrow\bc$ and the \emph{slicing} $\schp$ is a collection of abelian subcategories $\schp(\phi)\subset\scht$ for all $\phi\in\br$ such that
\begin{itemize}
	\item[(a)] $\schp(\phi+1)=\schp(\phi)[1]$ and $\Hom(\schp(\phi_1),\schp(\phi_2))=0$ for any $\phi_1>\phi_2$;
	\item[(b)] one has $Z(E)\in\br_{>0}\cdot\exp(\pi\phi\bm{i})$ for any $0\neq E\in\schp(\phi)$;
	\item[(c)] every object $E$ of $\scht$ admits a Harder--Narasimhan filtration
	\begin{displaymath}
	 \xymatrix{
		0=E_0\ar[r]&E_1\ar[r]&E_2\ar[r]&\cdots\ar[r]&E_{n-1}\ar[r]&E_n=E
    }\end{displaymath}
	with $A_i:=\Cone(E_{i-1}\rightarrow E_i)\in\schp(\phi_i)$ for all $i$ such that $\phi_1>\cdots>\phi_n$. 
	\item[(d)] there exists a quadratic form $Q$ on $\Num(\scht)_{\br}$ such that $\ker(Z_{\br})$ is negative definite with respect to $Q$, and one has $Q([E])\geq0$ for any $E\in\schp(\phi)$.
\end{itemize}
The set of all such stability conditions on $\scht$ is denoted by $\Stab(\scht)$ or simply $\Stab(X)$ if one has $\scht=\cate{D}^b(X)$ for a smooth projective variety $X$.
\end{definition}

The set $\Stab(\scht)$ carries a complex manifold structure \cite{Bri07}. The universal cover 
$$
\grp  = \left \{ \tilde{g}=(M, f) ~  \middle\vert ~ 
\begin{array}{l} 
	M \in \GLp(2,\br), f \colon \br \rightarrow \br \text{ is} \\ 
	\text{an increasing function such} \\
	\text{that for all $\phi \in \br$ we have} \\
	f(\phi+1) = f(\phi) + 1 \textup{ and }\\
	M \cdot e^{\pi \phi\bm{i}} \in \br_{>0} \cdot e^{\pi f(\phi)\bm{i}} 
\end{array}  \right \}
$$
of the group $\GLp(2,\br)$ acts on $\Stab(\scht)$ on the right via
$$(\schp,Z).(M,f):=(\schp',M^{-1}\circ Z)$$
where $\schp'$ is defined by $\schp'(\phi):=\schp(f(\phi))$. Also, the group $\Aut(\scht)$ acts on $\Stab(\scht)$ on the left. These two group actions commute.

A stability condition $\sigma$ is called \emph{$\Phi$-fixed} for some element $\Phi\in\Aut(\scht)$ if one has $\Phi.\sigma=\sigma$ and is called \emph{$\Phi$-invariant} if $\Phi.\sigma=\sigma.\tilde{g}$ for some $\tilde{g}\in\grp$.

\begin{theorem}[{{\cite[Theorem A.10]{JLLZ24}}}]\label{2_Serre-invariant}
Consider an Enriques category $\sche$ described in Example \ref{2_EC-example1} or Example \ref{2_EC-example2}, then all the $\funct{S}_{\sche}$-invariant stability conditions on $\sche$ are contained in the same $\grp$-orbit of $\Stab(\sche)$. 
\end{theorem}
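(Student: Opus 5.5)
The plan is to follow the strategy for Serre-invariant stability conditions on rank-two Kuznetsov components, as in the work of Pertusi--Yang and Feyzbakhsh--Pertusi. First I reduce $\funct{S}_{\sche}$-invariance to invariance under the involution $\amalg=\funct{S}_{\sche}[-2]$. Second I prove a uniqueness statement for a fixed central charge. Third I show that $\grp$ acts transitively on the admissible central charges.

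\emph{Step 1 (rigidifying invariance).} Let $\sigma=(\schp,Z)$ be $\funct{S}_{\sche}$-invariant. Since $[2]$ commutes with everything and is itself a $\grp$-translate, $\sigma$ is also $\amalg$-invariant, say $\amalg.\sigma=\sigma.\tilde g$. I will use the fact, recorded in Proposition~\ref{5_numerical}, that $\Num(\sche)$ has rank two in both examples, with an explicit negative-definite Euler form. I expect $\amalg$ to act trivially on $\Num(\sche)$, which can be checked on explicit generators. Then $Z\circ\amalg=Z$, so the matrix part $M$ of $\tilde g$ fixes the image of $Z$. Condition (d) makes $Z_{\br}$ injective on the rank-two lattice, so $M=\id$ and $\tilde g$ is an even or odd integer shift. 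Because $\amalg^2=\id$, $\tilde g^2$ is trivial, hence $\tilde g$ is trivial and $\sigma$ is $\amalg$-fixed. In particular every $\sigma$-semistable $E$ has $\funct{S}_{\sche}E$ semistable of phase exactly $\phi(E)+2$.

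\emph{Step 2 (uniqueness for fixed $Z$).} Take $\sigma$-stable $E$ and apply Serre duality $\Hom(E,E[2])=\Hom(E,\amalg E)^\vee$. Since $\amalg E$ is stable of the same phase, $\hom(E,E[2])\le 1$, with equality iff $\amalg E\cong E$. Combined with $\hom(E,E)=1$, this gives $\mathrm{ext}^1(E,E)\le 2-\chi(E,E)$, a lattice-theoretic bound in terms of $[E]$. I then use the weak Mukai lemma of Bayer--Lahoz--Macr\`i--Stellari and Feyzbakhsh--Pertusi. If $\sigma_1,\sigma_2$ are Serre-invariant with the same central charge, I want every $\sigma_1$-stable $E$ to be $\sigma_2$-semistable of the same phase. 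I would argue by induction on $-\chi(E,E)$, which is bounded below because $\chi$ is negative definite on the rank-two lattice. Otherwise the $\sigma_2$-HN filtration of $E$ has factors $A,B$ with $\Hom(A,B)=0$ and, thanks to Serre invariance, $\Hom(B,A[2])=0$. The Mukai lemma then forces $\mathrm{ext}^1(A,A)+\mathrm{ext}^1(B,B)\le \mathrm{ext}^1(E,E)$. This contradicts the bound from the first part once the classes are written out in the rank-two lattice, with the same central charge constraining the phases. The conclusion is $\schp_1=\schp_2$.

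\emph{Step 3 (transitivity on central charges).} Every central charge of a Serre-invariant $\sigma$ is an injective map $\Num(\sche)_{\br}\cong\br^2\to\bc$. By the support property, the orientation of the image is fixed by requiring that the classes of the two standard stable objects, which exist in both examples (the images of ideal sheaves of lines or conics), map with increasing phases. Any two such maps differ by an element of $\GLp(2,\br)$, which lifts to $\grp$. So after acting by $\grp$ we may assume $Z_1=Z_2$, and Step~2 finishes the proof, existence being known from the literature.

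\emph{Main obstacle.} I expect the main obstacle to be Step~2. The numerical inequality must exclude every possible destabilizing pair in the rank-two lattice. That requires the explicit form of $\chi$ from Proposition~\ref{5_numerical}, and possibly a case check on the smallest classes, where the induction on $-\chi(E,E)$ starts. I would first try to handle those base cases by identifying the stable objects of minimal $-\chi$ directly.
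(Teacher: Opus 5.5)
The paper gives no proof of this statement; it imports it from \cite[Theorem A.10]{JLLZ24}. Your overall architecture is in the spirit of the Feyzbakhsh--Pertusi-type argument used there: reduce to $\amalg$-fixed stability conditions, use the weak Mukai lemma, align central charges via test objects, then compare slicings. However, Step~2, which carries all the weight, fails as written, because there is no numerical contradiction to be had.

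Take $A\to E\to B$ with $A,B$ both $\sigma_2$-semistable, and combine $\mathrm{ext}^1(E,E)\le 2-\chi(E,E)$ with $\mathrm{ext}^1(A,A)\ge 1-\chi(A,A)$ and $\mathrm{ext}^1(B,B)\ge 1-\chi(B,B)$. These contradict the weak Mukai inequality only when $\chi(A,B)+\chi(B,A)>0$.

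\begin{itemize}
\item In the basis $v,w$ of Proposition~\ref{5_numerical}, take $[A]=v$, $[B]=w$ with $A,B$ stable and not $\amalg$-invariant, and $E\cong\amalg E$. Then $\mathrm{ext}^1(A,A)+\mathrm{ext}^1(B,B)=4=\mathrm{ext}^1(E,E)$, which is perfectly consistent. When $\chi(A,B)<0$ the bounds are even further apart.
\item If the HN filtration has more than two factors, $B$ (the rest of the filtration) is not semistable. So even the lower bound on $\mathrm{ext}^1(B,B)$ is unavailable.
\item Induction on $-\chi(E,E)$ cannot be fed. HN factors may have larger $-\chi$ than $E$ (e.g.\ $[E]=v$, $[A]=v+w$, $[B]=-w$), so the induction hypothesis never applies to them.
\end{itemize}

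The missing idea is that the weak Mukai lemma should make an induction run, not produce a numerical contradiction.

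\begin{itemize}
\item For a $\sigma_i$-semistable $F$ one has $\mathrm{ext}^1(F,F)=\hom(F,F)+\hom(F,\amalg F)-\chi(F,F)\ge 2$. Iterating the lemma along the $\sigma_2$-HN filtration $A_1,\dots,A_n$ of $E$ therefore gives $\mathrm{ext}^1(A_i,A_i)<\mathrm{ext}^1(E,E)$.
\item Induct on $\mathrm{ext}^1$, symmetrically in $\sigma_1,\sigma_2$, so that the $A_i$ become $\sigma_1$-semistable of the same phase.
\item The contradiction then comes from morphisms, not numerics. $\Hom(A_1,E)\neq0\neq\Hom(E,A_n)$ forces $\phi_1(A_1)\le\phi_1(E)\le\phi_1(A_n)$, which contradicts $\phi_2(A_1)>\phi_2(A_n)$.
\item You must show the phases agree exactly, since equal central charges only fix them modulo $2$.
\item You need a base case: objects with $\mathrm{ext}^1\le 3$ are stable for every Serre-invariant stability condition.
\end{itemize}

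That base case is also what Step~3 needs. The support property does not fix the orientation of $Z$. Orientation has to come from test objects in two independent classes being stable for every Serre-invariant $\sigma$, together with morphisms controlling their relative phases.

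Two minor points:
\begin{itemize}
\item Condition (d) only makes $\ker Z_{\br}$ negative definite for $Q$; it does not force $Z_{\br}$ to be injective, so nondegeneracy of $Z$ needs its own argument.
\item With $M=\id$ only even shifts occur, not odd ones.
\end{itemize}
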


\subsection{K3 surfaces}
Let $S$ be a projective K3 surface, then there exists a canonical weight-two Hodge structure on the \emph{Mukai lattice}
$$\tilde{\HH}(S,\bz)=\HH^0(S,\bz)\oplus\HH^2(S,\bz)\oplus\HH^4(S,\bz)$$
equipped with the Mukai pairing $\langle(r_1,\alpha_1,s_1),(r_2,\alpha_2,s_2)\rangle:=\alpha_1.\alpha_2-r_1s_2-r_2s_1$ where the product $\alpha_1.\alpha_2$ is the usual intersection in $\HH^2(S,\bz)$.

The algebraic part of the Mukai lattice $\tilde{\HH}(S,\bz)$ is identified with $\Num(S)$ through the Mukai vector $v(-):=\ch(-).\sqrt{\td(S)}=\ch(-).(1,0,1)$. The sublattice $\Num(S)$ has signature $(2,\rho)$ under the Mukai pairing where $\rho$ is the Picard rank of $S$. 

The space $\Gr^+_2(\Num(S)_{\br})$ of oriented positive definite two-dimensional subspaces of $\Num(S)_{\br}$ has two connected components and an isometry on $\Num(S)$ is said to be \emph{orientation-preserving} if it preserves each of the two connected components.

\begin{theorem}[\cite{HMS09,Or97}]\label{2_K3-criterion-we-need}
An autoequivalence $\Phi$ on $\cate{D}^b(S)$ canonically induces a Hodge isometry $\Phi^H$ on $\tilde{\HH}(S,\bz)$ which is orientation-preserving on $\Num(S)$.
\end{theorem}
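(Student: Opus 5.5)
The plan is to build $\Phi^H$ from the Fourier--Mukai kernel, and to obtain orientation-preservation on $\Num(S)$ by splitting off the transcendental directions from the full positive four-space.

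\emph{Constructing $\Phi^H$.} By Orlov's representability theorem \cite{Or97}, $\Phi\simeq\Phi_{\schp}$ for a kernel $\schp\in\cate{D}^b(S\times S)$, unique up to isomorphism. Let $p,q$ be the two projections and set
$$v(\schp):=\ch(\schp).\sqrt{\td(S\times S)}.$$
Define
$$\Phi^H(\alpha):=p_*\bigl(q^*\alpha. v(\schp)\bigr).$$
Grothendieck--Riemann--Roch gives $\Phi^H(v(E))=v(\Phi(E))$ for every object $E$. Composition of kernels corresponds to composition of the cohomological transforms. The quasi-inverse of $\Phi$ has kernel $\schp^\vee\otimes q^*\omega_S[2]\simeq\schp^\vee[2]$, so $\Phi^H$ is invertible, with inverse given by the cohomological transform of that kernel.

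\emph{Isometry, integrality, Hodge structure.} For isometry I would use the standard identity $\langle v(E),v(F)\rangle=-\chi(E,F)$ together with the fact that the adjoint kernel induces the adjoint correspondence under the Mukai pairing. This shows $\Phi^H$ is an isometry of $\tilde{\HH}(S,\bq)$, and hence of $\tilde{\HH}(S,\br)$ and $\tilde{\HH}(S,\bc)$.

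Integrality is Mukai's observation. On a K3 surface $\sqrt{\td}=(1,0,1)$ is integral, and $v(\schp)$ lies in the integral lattice after the usual correction by half-integers, which cancel in the pushforward.

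$\Phi^H$ is a morphism of Hodge structures because $v(\schp)$ is a rational algebraic class, hence of type $(p,p)$. Therefore $\Phi^H$ sends $\HH^{2,0}=\bc\sigma$ to $\HH^{2,0}$, and in particular preserves $\Num(S)=\tilde{\HH}^{1,1}\cap\tilde{\HH}(S,\bz)$.

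\emph{Orientation on $\Num(S)$.} Take the real Mukai lattice, of signature $(4,\rho+\dots)$, and choose a positive four-space of the form $P_4=\langle\mathrm{Re}\,\sigma,\mathrm{Im}\,\sigma\rangle\oplus P_2$, where $P_2\subset\Num(S)_\br$ is a positive two-plane. Since $\Phi^H$ is an isometry preserving the orthogonal decomposition $\Num(S)_\br\oplus T(S)_\br$, it preserves the splitting of $P_4$ into these two summands.

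On the transcendental summand, $\Phi^H(\sigma)=\lambda\sigma$ for some $\lambda\in\bc^\times$. Multiplication by $\lambda$ acts on the real plane $\langle\mathrm{Re}\,\sigma,\mathrm{Im}\,\sigma\rangle$ as a complex scaling, which has positive determinant, so it preserves the orientation of this plane. Orientations multiply across the direct sum. Hence $\Phi^H$ preserves the orientation of positive four-spaces in $\tilde{\HH}(S,\br)$ if and only if it preserves the orientation of positive two-planes in $\Num(S)_\br$.

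The statement therefore reduces to the theorem of Huybrechts--Macrì--Stellari \cite{HMS09} that the cohomological action of any autoequivalence preserves the natural orientation of the positive directions of $\tilde{\HH}(S,\br)$.

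\emph{Main obstacle.} The hard step is this orientation-preservation on the four-space, which \cite{HMS09} proves by deforming $S$ together with the kernel to a generic (twisted, formal) K3 surface, where autoequivalences are explicitly understood. I would not reprove it; I would cite it. An alternative route would use Bridgeland's covering map $\Stab^\dagger(S)\to\schp_0^+(S)$, but that needs $\Phi$ to preserve the distinguished component, which is not known in general, so I would avoid it. Everything else in the plan is formal bookkeeping with kernels and Grothendieck--Riemann--Roch.
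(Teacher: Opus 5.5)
Your proposal is correct and matches the paper, which gives no argument of its own and simply cites Orlov for the kernel and Huybrechts--Macr\`i--Stellari for orientation; your block-diagonal reduction from positive four-spaces in $\tilde{\HH}(S,\br)$ to positive planes in $\Num(S)_{\br}$, using $\Phi^H(\sigma)=\lambda\sigma$, is the standard step that this citation leaves implicit. One small correction: $v(\schp)$ is already integral on $S\times S$, by K\"unneth in topological K-theory together with integrality of $v$ on each K3 factor, so no half-integer cancellation in the pushforward is involved.
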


A stability condition $(\schp,Z)$ on $\cate{D}^b(S)$ is said to be \emph{geometric} if all the skyscraper sheaves are simple objects in a fixed $\schp(\phi)$. By virtue of \cite{Bri08}, the complex manifold $\Stab(S)$ has a non-empty distinguished connected component $\Stab^{\dag}(S)$ containing all the geometric stability conditions. The subgroup of $\Aut(\cate{D}^b(S))$ which preserves the connected component $\Stab^{\dag}(S)$ is denoted by $\Aut^{\dag}(\cate{D}^b(S))$.

In this note, we need to use the following observation.

\begin{proposition}\label{2_K3-property-Bridgeland-coro}
Consider an autoequivalence $\Pi$ on $\cate{D}^b(S)$ such that
$$\{\sigma\in\Stab(S)\mid\Pi.\sigma=\sigma\}=\sigma_0.\grp$$
for some stability condition $\sigma_0$ in $\Stab^{\dag}(S)$, then any autoequivalence $\Phi$ on $\cate{D}^b(S)$ satisfying $\Phi^H=\id$ and $\Phi\Pi=\Pi\Phi$ is isomorphic to an even shift.
	
\begin{proof}
Since $\Phi\Pi=\Pi\Phi$, one has $\Pi.(\Phi.\sigma_0)=\Phi.\sigma_0$ and then $\Phi.\sigma_0=\sigma_0.\tilde{g}$ for some element $\tilde{g}=(M,f)\in\grp$. The stability conditions $\sigma_0$ and $\Phi.\sigma_0$ have the same central charge $Z$ as $\Phi^H=\id$. It follows that $M$ is the identity matrix, because the image of $Z_{\br}$ has real rank two. Then one has $\Phi.\sigma_0=[2m].\sigma_0$ for some integer $m$ by construction. One concludes by applying \cite[Proposition 1.4]{Hu16} to $\Phi\circ[-2m]$.
\end{proof}
\end{proposition}

\section{Technical preparations}
This section provides some technical statements that are not directly available in the literature. We will still fix a noncommutative variety $\scht$.

\subsection{Three observations}
To begin with, we record three observations for the noncommutative variety $\scht$. The first one is about idempotents in $\Hoch^0(\scht)$.

\begin{proposition}\label{3_observation-1}
There exists a decomposition 
$$\scht\simeq\scht_1\oplus\cdots \oplus\scht_n$$ 
such that $\Hoch^0(\scht_i)$ has no non-trivial idempotents for every $i$.

\begin{proof}
Choose pairwise orthogonal primitive idempotents $e_1,\dots,e_n$ in $\Hoch^0(\scht)$ such that $e_1+\cdots +e_n=1$. Then one has $\funct{Id}_{\scht}\simeq\funct{F}_1\oplus\cdots\oplus\funct{F}_n$ for some non-trivial functors $\funct{F}_i$ in $\End(\scht)$. Let $\scht_i$ be the essential image of $\funct{F}_i$, then $e_i$ acts as the identity on $\scht_i$ and other idempotents act as zero. In this case, one has
$$\scht\simeq\scht_1\oplus\cdots \oplus\scht_n$$
and $\Hoch^0(\scht_i)=e_i\Hoch^0(\scht)$ by construction. Since $e_i$ is primitive, the Hochschild cohomology $\Hoch^0(\scht_i)$ has no non-trivial idempotents.
\end{proof}
\end{proposition}

The second observation is about the direct summands of $\scht$ in Proposition \ref{3_observation-1}.

\begin{proposition}\label{3_observation-2}
Suppose that $\Hoch^0(\scht)$ has no non-trivial idempotents. Then for any four autoequivalences $\funct{A},\funct{B},\funct{C},\funct{D}$ on $\scht$ satisfying $\funct{A}\oplus\funct{B}\simeq\funct{C}\oplus\funct{D}$, one has either $\funct{A}\simeq\funct{C}$ and $\funct{B}\simeq\funct{D}$ or $\funct{A}\simeq\funct{D}$ and $\funct{B}\simeq\funct{C}$.

\begin{proof}
Let $\funct{F}$ be an autoequivalence, then one has $$\Hom(\funct{F},\funct{F})=\Hom(\funct{Id}_{\scht},\funct{Id}_{\scht})=\Hoch^0(\scht)$$
where the first equality is given by composing with $\funct{F}^{-1}$. Since $\Hoch^{0}(\scht)$ is finite-dimensional and has no non-trivial idempotents, it is local. The assertion now follows from the Krull–Schmidt uniqueness theorem, see e.g.~\cite[Theorem 4.2]{Kra15}.
\end{proof}
\end{proposition}

The last observation needs the noncommutative Chern character
$$\ch\colon\Kum(\scht)\rightarrow\Hoch_0(\scht)$$
defined in \cite[Page 4]{Shk13}. It is compatible with the usual Chern character through the Hochschild--Kostant--Rosenberg isomorphism \cite{HKR62,Swan96} when $\scht$ is the bounded derived category $\cate{D}^b(X)$ of a smooth projective variety $X$.

\begin{proposition}\label{3_observation-3}
One has $\dim\Hoch_0(\scht)\geq\rank\Num(\scht)$.
	
\begin{proof}
Set $r = \rank\Num(\scht)$ and choose classes $\kappa_1,\dots,\kappa_r$ in $\Kum(\scht)$ whose numerical classes form a basis of $\Num(\scht)$. Suppose for contradiction that their Chern characters are linearly dependent in $\Hoch_0(\scht)$. Then there exist complex numbers $\lambda_1,\dots,\lambda_r$, not all zero, such that
$$\kappa=\sum\lambda_i\kappa_i\in\Kum(\scht)_\bc$$
and $\ch(\kappa)=0$. The image of $\kappa$ in $\Num(\scht)_{\bc}$ is nonzero because the numerical classes of $\kappa_i$ are linearly independent. However, one has 
$$\chi(\kappa,x)=\langle\ch(x),\ch(\kappa)^\vee\rangle=0$$
for every $x\in\Kum(\scht)_{\bc}$ using noncommutative Hirzebruch–Riemann–Roch \cite{Shk13} which forces the numerical class of $\kappa$ to vanish, impossible. Thus $\dim\Hoch_0(\scht)\geq r$.
\end{proof}
\end{proposition}

\subsection{Invariant stability conditions}
Stability conditions are compatible with group actions and equivariant categories. Let $\scht_G$ be the equivariant category for a finite abelian $G$-action on $\scht$ and assume that $\sigma=(\schp,Z)$ is a \emph{$G$-fixed} stability condition on $\scht$ i.e.~it is fixed by the autoequivalence induced by any $g\in G$.

In this case, we can define a pair $\sigma_G=(\schp_G,Z_G)$ on $\scht_G$ by declaring
\begin{align*}
	Z_G&:=Z\circ f\\
	\schp_G(\phi)&:=\{E\in\scht_G\mid\funct{Forg}(E)\in\schp(\phi)\}
\end{align*}
where $f\colon\Num(\scht_G)\rightarrow\Num(\scht)$ is the group homomorphism induced by the forgetful functor. The pair $\sigma_G=(\schp_G,Z_G)$ on $\scht_G$ satisfies the first three axioms in Definition \ref{2_stab-definition} by virtue of \cite[Theorem 4.9]{PPZ26}. The only issue is that one might not have a quadratic form on the whole space $\Num(\scht_G)_{\br}$ satisfying Definition \ref{2_stab-definition}(d).

In the following, we will provide two sufficient conditions under which one can find a desired quadratic form on $\Num(\scht_G)_{\br}$.

\begin{proposition}\label{3_invariant-stab-general}
The pair $\sigma_G=(\schp_G,Z_G)$ is a (numerical) stability condition on $\scht_G$ if the linear map $f_{\br}\colon\Num(\scht_G)_{\br}\rightarrow\Num(\scht)_{\br}$ is injective.
	
\begin{proof}
By definition, one can find a quadratic form $Q$ on $\Num(\scht)_{\br}$ such that $\ker(Z_{\br})$ is negative definite and $Q([E])\geq 0$ for any $E\in\schp(\phi)$. Set $Q_G:=Q\circ f_{\br}$, then one sees immediately that $Q_G$ is non-negative on numerical classes of objects in $\schp_G(\phi)$ by construction. The injectivity of $f_{\br}$ ensures that the restriction of $Q_G$ on $\ker((Z_G)_{\br})$ is negative definite. So the form $Q_G$ satisfies Definition \ref{2_stab-definition}(d).
\end{proof}
\end{proposition}

\begin{remark}\label{3_invariant-stab-general-rem}
The condition on $f_{\br}$ is satisfied if the endomorphism on $\Num(\scht_G)$ induced by any element in $G^\vee$ is the identity. In fact, this ensures that $i_{\br}\circ f_{\br}$ is a multiple of the identity map on $\Num(\scht_G)_{\br}$, where $i$ is the map induced by $\funct{Inf}$.
\end{remark}

The next criterion is related to Enriques categories.

\begin{proposition}\label{3_invariant-stab-Enriques}
The pair $\sigma_G=(\schp_G,Z_G)$ is a (numerical) stability condition on $\scht_G$ if $\scht_G$ is 2-Calabi--Yau and its Euler form is positive definite on $\ker(f_{\br})$.
	
\begin{proof}
Choose the quadratic form $Q$ on $\Num(\scht)_{\br}$ as in Definition \ref{2_stab-definition}(d). Since $Q$ is negative definite on $\ker(Z_{\br})$, the set
$$\{v\in\Num(\scht)_{\br}\mid Q(v)\geq0\textup{ and }|Z(v)|\leq 1\}$$
is compact, possibly up to a rescaling. One can find a universal $\delta>0$ such that $|Z(v)|\geq\delta$ for every non-zero class in $\Num(\scht)$ satisfying $Q(v)\geq0$, because $\Num(\scht)$ is a finite rank lattice (see Proposition \ref{2_N(T)-discrete}).

A non-zero object $E$ in $\schp_G(\phi)$ admits a finite Jordan--Hölder filtration in the abelian category $\schp_G(\phi)$. Let $E_1,\dots,E_n$ be the simple factors. Then one has
$$\chi_{\scht_G}(E_i,E_j)=\dim\Hom(E_i,E_j)+\dim\Hom(E_j,E_i)-\dim\Ext^1(E_i,E_j)\leq 2$$
using Serre duality. It follows that $\chi_{\scht_G}(E,E)\leq 2n^2\leq \frac{2}{\delta^2}|Z_G(E)|^2$.

Since $\chi_{\scht_G}|_{\ker(f_{\br})}$ is positive definite by assumption, one has
$$\ker((Z_G)_{\br})=\ker(f_{\br})\oplus \left(\ker((Z_G)_{\br})\cap\ker(f_{\br})^{\perp}\right)$$
and then, for large enough $\lambda>0$, the quadratic form $Q_G$ on $\Num(\scht_G)_{\br}$ defined by 
$$Q_G(v):=\lambda Q(f_{\br}(v))-\chi_{\scht_G}(v,v)+\frac{2}{\delta^2}|Z_G(v)|^2$$ 
is negative definite on $\ker(Z_G)_{\br}$. One also checks that $$Q_G([E])\geq\lambda Q(f([E]))\geq0$$
for every object $E$ in $\schp_G(\phi)$, so $Q_G$ is the desired form.
\end{proof}
\end{proposition}

The following example illustrates the applications of these two criteria. 

\begin{example}\label{3_invariant-stab-Enriques-Example}
Let $\sche$ be an Enriques category in Example \ref{2_EC-example1} or Example \ref{2_EC-example2} and let $\cate{D}^b(S)$ be the associated 2-Calabi--Yau category. Then the involution $\amalg:=\funct{S}_{\sche}[-2]$ induces the trivial action on $\Num(\sche)$. In this case, the forgetful functor 
$$\sche=\cate{D}^b(S)_{\Stwo^\vee}\rightarrow\cate{D}^b(S)$$ 
satisfies Proposition \ref{3_invariant-stab-general}. Moreover, one can see from \cite[Propositions 5.6 and 6.6]{Liu26a} that the Euler form of $\cate{D}^b(S)$ is positive definite on $\ker(f_{\br})$ for the homomorphism $f\colon\Num(S)\rightarrow\Num(\sche)$ induced by the forgetful functor $\cate{D}^b(S)=\sche_{\Stwo}\rightarrow\sche$.

In this case, any $\amalg$-fixed stability condition $\sigma$ on $\sche$ induces a stability condition on $\cate{D}^b(S)$ according to Proposition \ref{3_invariant-stab-Enriques}. Let $\Pi$ be the generator of the dual group action on $\cate{D}^b(S)$, then any $\Pi$-fixed stability condition on $\cate{D}^b(S)$ induces a stability condition on $\sche$ by Proposition \ref{3_invariant-stab-general}.
\end{example}

In the end, we would like to mention that $(\sigma_G)_{G^\vee}$ is essentially equal to the original stability condition $\sigma$ by the equivalence in Theorem \ref{2_dual-dual-reconstruction}.

\begin{proposition}[{{\cite[Lemma 4.12]{PPZ26}}}]\label{3_invariant-stab-go-back}
Suppose that $\sigma=(\schp,Z)$ is $G$-fixed and $\sigma_G$ is $G^\vee$-fixed, then $(\sigma_G)_{G^\vee}$ corresponds to $(\schp,|G|\cdot Z)$ through $\funct{X}\colon(\scht_G)_{G^\vee}\simeq\scht$.
\end{proposition}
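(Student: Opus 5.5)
The plan is to compute both pieces of $(\sigma_G)_{G^\vee}$ directly from the definitions, transporting everything to $\scht$ by the equivalence $\funct{X}$ of Theorem \ref{2_dual-dual-reconstruction}. The key input is Proposition \ref{2_dual-dual-forgetful}: under $\funct{X}$, the forgetful functor $(\scht_G)_{G^\vee}\rightarrow\scht_G$ becomes $\funct{Inf}\colon\scht\rightarrow\scht_G$. So, up to $\funct{X}$, the slicing is
$$(\schp_G)_{G^\vee}(\phi)=\{A\in\scht\mid \funct{Inf}(A)\in\schp_G(\phi)\}=\Bigl\{A\in\scht\ \Bigm|\ \funct{Forg}\,\funct{Inf}(A)\simeq\textstyle\bigoplus_{g\in G}g_*A\in\schp(\phi)\Bigr\},$$
and the central charge is $Z\circ f\circ i$, where $i$ is the map induced by $\funct{Inf}$.

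\emph{Central charge.} For any $A$ one has $Z_G([\funct{Inf}A])=Z\bigl(\sum_{g}[g_*A]\bigr)=\sum_g Z([g_*A])$. Since $\sigma$ is $G$-fixed, we have $g.\sigma=\sigma$, which gives $Z\circ g_*^{-1}=Z$ on $\Num(\scht)$. Hence each term equals $Z([A])$, and the total is $|G|\cdot Z([A])$.

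\emph{Slicing.} We must show that $\bigoplus_g g_*A\in\schp(\phi)$ holds if and only if $A\in\schp(\phi)$.
\begin{itemize}
\item[($\Leftarrow$)] $G$-fixedness gives $g_*\schp(\phi)=\schp(\phi)$. Each $\schp(\phi)$ is closed under finite direct sums, since it is an abelian subcategory (equivalently, it is extension-closed).
\item[($\Rightarrow$)] $A$ is a direct summand of an object of $\schp(\phi)$, namely the summand for $g=e$. It therefore suffices that $\schp(\phi)$ is closed under direct summands in $\scht$. To see this, take the HN filtrations of $A$ and of a complement $B$. Their direct sum is an HN filtration of $A\oplus B$ once the phases are merged. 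By uniqueness of HN filtrations, all HN phases of $A$ must equal $\phi$, so $A\in\schp(\phi)$.
\end{itemize}

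Combining these two points, $(\sigma_G)_{G^\vee}$ is carried by $\funct{X}$ to the pair $(\schp,|G|\cdot Z)$. This pair is a stability condition, since rescaling $Z$ by a positive real is the $\grp$-action of a scalar matrix with $f=\id$ and preserves axiom (d) with the same $Q$. Consequently the axioms hold automatically on the other side, and the hypothesis that $\sigma_G$ is $G^\vee$-fixed is used only so that $(\sigma_G)_{G^\vee}$ is defined.

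The main point needing care is bookkeeping rather than mathematics. The identification of Proposition \ref{2_dual-dual-forgetful} must be compatible with the induced maps on $\Num$, so that $f_{G^\vee}$ really corresponds to $i$; this follows because isomorphic functors induce equal maps on $\Kum$, hence on $\Num$. The direct-summand closure of $\schp(\phi)$ is the only genuinely non-formal input, and the HN-uniqueness argument above handles it.
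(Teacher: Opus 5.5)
Your proof is correct and is essentially the argument behind the cited \cite[Lemma 4.12]{PPZ26}; the paper itself gives no proof beyond this citation. In both, one transports along $\funct{X}$, identifies the $G^\vee$-forgetful functor with $\funct{Inf}$ via Proposition \ref{2_dual-dual-forgetful}, and uses $\funct{Forg}\circ\funct{Inf}\simeq\bigoplus_{g\in G}g_*$ together with the $G$-invariance of $Z$ and of the slices $\schp(\phi)$, which are closed under finite sums and direct summands.
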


\section{The general criteria}
This section develops a general framework for excluding some potential non-trivial admissible subcategories of certain noncommutative varieties.

\subsection{Quasi-Enriques categories}
The notion of a quasi-Enriques category is designed for relating the spherical functors and the Enriques categories.

\begin{definition}
A noncommutative variety $\sche$ is said to be a \defi{quasi-Enriques category} if there exists a 2-Calabi-Yau category $\schc$ and a functor 
$$\funct{F}\colon\sche\rightarrow\schc$$
with left adjoint $\funct{L}$ and right adjoint $\funct{R}$, such that there exists an isomorphism
$$\funct{R}\funct{F}\simeq\funct{Id}_{\sche}\oplus\funct{S}_{\sche}[-2]$$
under which the adjunction unit $\funct{Id}_{\sche}\rightarrow\funct{R}\funct{F}$ is the inclusion of the first summand.
	
A quasi-Enriques category is said to be \defi{over a K3 surface} if $\schc$ is equivalent to the bounded derived category $\cate{D}^b(S)$ of a K3 surface $S$.
\end{definition}

The property of being a quasi-Enriques category is inherited by its admissible subcategories. It is in fact a variant of \cite[Proposition 2.1]{Add16}.

\begin{proposition}\label{4_quasi-En-subcat}
Consider an admissible subcategory
$\funct{J}\colon\schr\hookrightarrow\sche$ of a quasi-Enriques category $\sche$ and the functor $\funct{F}\colon\sche\rightarrow\schc$ defining $\sche$, then $\funct{F}_0:=\funct{F}\circ \funct{J}$ realizes the noncommutative variety $\schr$ as a quasi-Enriques category.

\begin{proof}
Since $\schr$ is admissible and $\sche$ is a quasi-Enriques category, the composite $\funct{F}_0$ has a right adjoint $\funct{R}_0$ and a left adjoint. Its right adjoint has the form $\funct{K}\circ\funct{R}$ where $\funct{K}$ is the right adjoint of $\funct{J}$ and $\funct{R}$ is the right adjoint of $\funct{F}$. One has
$$\funct{R}_0\circ\funct{F}_0\simeq\funct{K}\circ(\funct{Id}_{\sche}\oplus\funct{S}_{\sche}[-2])\circ\funct{J}\simeq\funct{Id}_{\schr}\oplus\funct{S}_{\schr}[-2]$$
using the fact $\funct{S}_{\schr}=\funct{K}\circ\funct{S}_{\sche}\circ\funct{J}$. The adjunction unit $\funct{Id}_{\schr}\rightarrow\funct{R}_0\funct{F}_0$ is the inclusion of the first summand in this direct sum by construction. 
\end{proof}
\end{proposition}

The core property of a quasi-Enriques category $\sche$ is that the functor $\funct{F}$ in its definition is spherical.

\begin{proposition}\label{4_quasi-En-ppst}
The functor $\funct{F}\colon\sche\rightarrow\schc$ defining a quasi-Enriques category $\sche$ is spherical. Moreover, one has $\funct{C}_{\funct{F}}\simeq\funct{S}_{\sche}[-3]$ and $\funct{T}_{\funct{F}}\funct{F}\simeq\funct{F}\funct{S}_{\sche}[-1]$.

\begin{proof}
By definition, one has $\funct{C}_{\funct{F}}\simeq\funct{S}_{\sche}[-3]$ for the cotwist associated with $\funct{F}$. Then
$$\funct{R}\simeq\funct{S}_{\sche}\circ\funct{L}\circ\funct{S}_{\schc}^{-1}\simeq\funct{S}_{\sche}\circ\funct{L}[-2]\simeq\funct{C}_{\funct{F}}\circ\funct{L}[1]$$
using $\funct{S}_{\schc}\simeq[2]$ and Serre duality. Hence $\funct{F}$ is a spherical functor. The statement about spherical twist $\funct{T}_{\funct{F}}$ follows from a direct computation.
\end{proof}
\end{proposition}

This property allows us to make the following observation.

\begin{proposition}\label{4_quasi-En-rank1}
Suppose that $\sche$ satisfies the following conditions
\begin{itemize}
	\item[(a)] $\rank\Num(\sche)=1$;
	\item[(b)] a generator $v$ of $\Num(\sche)$ satisfies $\chi_{\sche}(v,v)<0$;
\end{itemize}
then $\sche$ cannot be a quasi-Enriques category over a K3 surface $S$.

\begin{proof}
Suppose for contradiction that $\sche$ is a quasi-Enriques category and choose a functor $\funct{F}\colon\sche\rightarrow\cate{D}^b(S)$ defining it. One has $(\funct{S}_{\sche})_*v=\epsilon v$ for some $\epsilon=\pm1$ by our assumptions, then Serre duality ensures that $\epsilon=1$ as $\chi_{\sche}(v,v)\neq0$.
	
Set $w:=\funct{F}_*v$ and let $\funct{R}$ be the right adjoint of $\funct{F}$, then 
$$\langle w,w\rangle=-\chi_S(w,w)=-\chi_{\sche}(v,(\funct{R}\funct{F})_*v)=-2\chi_{\sche}(v,v)$$
using adjunction and $(\funct{S}_{\sche})_*v= v$. 
	
Choose $x\in\Num(S)$, one writes $\funct{R}_*x = mv$ for some $m\in\bz$. Then the adjunction ensures that $m\langle w,w\rangle=2\langle w,x\rangle$. One can therefore obtain
$$(\funct{T}_{\funct{F}})_*x=-(\funct{F}\funct{R}-\funct{Id}_S)_*x=x-\frac{2\langle x,w\rangle}{\langle w,w\rangle} w$$
for any class $x\in\Num(S)$. It is exactly the reflection at $w$. One can see that it is orientation-reversing by $\langle w,w\rangle=-2\chi_{\sche}(v,v)>0$, contradicting Theorem \ref{2_K3-criterion-we-need}.

In fact, the orthogonal subspace $w^{\perp}$ in $\Num(S)_{\br}$ contains a vector $u$ satisfying $\langle u,u\rangle>0$. Since $\langle w,w\rangle>0$, the plane $\br w\oplus \br u$ is positive definite. It is direct to check the reflection $(\funct{T}_{\funct{F}})_*$ reverses its orientation.
\end{proof}
\end{proposition}

Also, we need the following observation to exclude non-trivial admissible subcategories with trivial numerical Grothendieck group.

\begin{proposition}\label{4_quasi-En-rank0}
Consider a quasi-Enriques category $\sche$ and the functor $\funct{F}$ defining it, such that $\Hoch^0(\sche)$ has no non-trivial idempotent and $\funct{T}_{\funct{F}}\funct{F}\simeq\funct{F}[2m]$ for some integer $m$. Then one has $\funct{S}_{\sche}\simeq[2m+1]$.

\begin{proof}
One can see $\funct{F}\funct{S}_{\sche}[-1]\simeq\funct{T}_{\funct{F}}\funct{F}\simeq\funct{F}[2m]$ by assumption and Proposition \ref{4_quasi-En-ppst}. 

Applying the right adjoint $\funct{R}$ of $\funct{F}$ to $\funct{F}\funct{S}_{\sche}[-1]\simeq\funct{F}[2m]$ shows that 
$$
\funct{S}_{\sche}[-1]\oplus\funct{S}^2_{\sche}[-3]\simeq[2m]\oplus\funct{S}_{\sche}[2m-2]
$$
where one also uses $\funct{R}\funct{F}\simeq\funct{Id}_{\sche}\oplus\funct{S}_{\sche}[-2]$.

By Proposition \ref{3_observation-2}, one has either $\funct{S}_{\sche}[-1]\simeq[2m]$ or $\funct{S}_{\sche}[-1]\simeq\funct{S}_{\sche}[2m-2]$. In the first case, we conclude. In the latter case, one has $\funct{Id}_{\sche}\simeq[2m-1]$. Then
$$\Hom(E,E[2mn-n])=\Hom(E,E)\neq0$$ 
for any nonzero object $E$ of $\sche$ and $n\in\bz$, violating the properness of $\sche$.
\end{proof}
\end{proposition}

\subsection{Applications to Enriques categories}
In this subsection, we will fix an Enriques category $\sche$ over a 2-Calabi-Yau category $\schc$ and choose a generator $\Pi$ of the $\Stwo$-action on $\schc$. Then $\Pi$ is a non-trivial involution of $\schc$.

\begin{proposition}\label{4_En-is-quasi-En}
The forgetful functor $\funct{Forg}\colon\sche\rightarrow\schc$ realizes the Enriques category $\sche$ as a quasi-Enriques category. The associated spherical twist is $\Pi[1]$.

\begin{proof}
The inflation functor $\funct{Inf}\colon\schc\rightarrow\sche$ of the equivariant structure is both left and right adjoint to the forgetful functor $\funct{Forg}$. Also, one has $$\funct{Inf}\circ\funct{Forg}\simeq\funct{Id}_{\sche}\oplus\funct{S}_{\sche}[-2]$$ 
where we use the fact $\amalg=\funct{S}_{\sche}[-2]$. The adjunction unit is the inclusion of the first summand in this direct sum by construction. The statement about the associated spherical twist follows from the definitions.
\end{proof}
\end{proposition}

\begin{remark}
In fact, for an arbitrary equivariant category $\schx_G$, one can check that the forgetful functor $\schx_G\rightarrow\schx$ is a $\bp^n$-functor in the sense of \cite[Section 4]{Add16} if $G$ is a cyclic group of order $n+1$.
\end{remark}

\begin{remark}\label{4_En-is-quasi-En-rmk}
Consequently, the composite $\funct{F}:=\funct{Forg}\circ\funct{J}$ realizes an admissible subcategory $\funct{J}\colon\schr\hookrightarrow\sche$ as a quasi-Enriques category. In this case, one has $\Pi\funct{F}\simeq\funct{F}$.
\end{remark}

It immediately leads to the following assertion.

\begin{corollary}\label{4_En-rank1-exclusion}
A noncommutative variety $\schr$ satisfying the following conditions
\begin{itemize}
	\item[(a)] $\rank\Num(\schr)=1$;
	\item[(b)] a generator $v$ of $\Num(\schr)$ satisfies $\chi_{\schr}(v,v)<0$;
\end{itemize}
cannot be an admissible subcategory of an Enriques category over a K3 surface.

\begin{proof}
It follows from Proposition \ref{4_quasi-En-subcat}, Proposition \ref{4_quasi-En-rank1} and Proposition \ref{4_En-is-quasi-En}.
\end{proof}
\end{corollary}

\section{The applications}
In this section, we will consider a semiorthogonal decomposition
\begin{equation}\tag{$\star$}\label{4_label}
	\schx=\langle\schr,\schs\rangle
\end{equation}
where the noncommutative variety $\schx$ is the Kuznetsov component of a quartic double solid or a special Gushel--Mukai threefold. It is worth mentioning that these two categories are deformation equivalent by \cite[Theorem 1.4]{KS25}.

\subsection{Some numerical invariants}
The main numerical invariants of the noncommutative variety $\schx$ are computed in \cite{Ku09,Ku09b,KP18}, see also \cite{Liu26a,PPZ26}.

\begin{proposition}\label{5_numerical}
The numerical Grothendieck group $\Num(\schx)$ has rank two and for a suitable basis of $\Num(\schx)$ its intersection matrix is
$$\begin{pmatrix}
	-1&0\\
	0&-1
\end{pmatrix}$$
under the Euler pairing. Also, one has $\Hoch_{\bullet}(\schx)=\bc^{10}[1]\oplus\bc^2[0]\oplus\bc^{10}[-1]$.
\end{proposition}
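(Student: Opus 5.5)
The statement consists of two assertions, and I would treat them separately: the numerical claim about $\Num(\schx)$ with its Euler form, and the computation of $\Hoch_\bullet(\schx)$. Both come from the semiorthogonal decomposition of the ambient threefold, $\cate{D}^b(Y)=\langle\schx,E_1,E_2\rangle$, where $(E_1,E_2)=(\sho_Y,\sho_Y(1))$ or $(\sho_Y,\shu_Y^\vee)$ is an exceptional pair, combined with the additivity results Corollary \ref{2_Num-decomposition} and Proposition \ref{2_Hoch-decomposition}.

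For Hochschild homology I would start from the HKR isomorphism, $\Hoch_i(Y)=\bigoplus_{q-p=i}\HH^{p,q}(Y)$. A quartic double solid has $h^{1,2}=10$. A special Gushel--Mukai threefold also has $h^{1,2}=10$; this can be read off via deformation to an ordinary Gushel--Mukai threefold, or from \cite{KP18}. In both cases $h^{p,p}=1$ for $p=0,\dots,3$, and all other Hodge numbers vanish. This gives
$$\Hoch_\bullet(Y)=\bc^{10}[1]\oplus\bc^4[0]\oplus\bc^{10}[-1].$$
Each exceptional object contributes exactly $\bc[0]$, so Proposition \ref{2_Hoch-decomposition} leaves $\bc^{10}[1]\oplus\bc^2[0]\oplus\bc^{10}[-1]$ for $\schx$.

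For $\Num$, the group $\Num(Y)$ has rank $4$, because the cohomology is algebraic of rank one in each even degree and $\Num$ agrees with the image of the Chern character modulo numerical equivalence. Corollary \ref{2_Num-decomposition} then gives $\rank\Num(\schx)=2$, identified with the orthogonal complement of $[E_1],[E_2]$ under $\chi$. To get the intersection matrix, I would write explicit classes: following \cite{Ku09}, take the projections into $\schx$ of an ideal sheaf of a line and of a point (or suitable twists) and compute $\chi$ by Riemann--Roch on $Y$. For the quartic double solid, with $H^3=2$, $c_2(T_Y)=\dots$, the orthogonal lattice comes out with Gram matrix $\begin{pmatrix}-1&-1\\-1&-2\end{pmatrix}$ in the basis $v_1=[\schi_\ell]$, $v_2=[\text{projected point}]$. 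Changing to the basis $v_1,\,v_2-v_1$ diagonalizes it to $-\mathrm{id}$. The Gushel--Mukai case is identical, by the computation in \cite{KP18}; alternatively, the deformation equivalence \cite{KS25} transports the numerical lattice.

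The main obstacle is the Riemann--Roch bookkeeping. I would organize it using the Mukai-type formula $\chi(E,F)=\int\ch(E)^\vee\ch(F)\td(Y)$, and check the result against the known fact that the form must be negative definite and unimodular. Up to isometry, $-\mathrm{id}_2$ is the unique such form of rank two, so once definiteness and determinant $1$ are established, the matrix follows automatically.
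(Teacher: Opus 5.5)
Your argument is correct and is essentially what the paper relies on. The paper gives no proof of its own but cites \cite{Ku09,Ku09b,KP18}, where $\Hoch_\bullet$ comes from the Hodge numbers via additivity along $\cate{D}^b(Y)=\langle\schx,E_1,E_2\rangle$ and the Euler form comes from Riemann--Roch, exactly as you outline. Your Gram matrix $\begin{pmatrix}-1&-1\\-1&-2\end{pmatrix}$ for $[\mathcal{I}_\ell]$ and the projected point is right for the quartic double solid, and its determinant $1$ also shows these two classes form a $\bz$-basis of $\Num(\schx)$ rather than just a finite-index sublattice.

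For the Gushel--Mukai case, rely on \cite{KP18} rather than \cite{KS25}: numerical Grothendieck groups are not automatically preserved under deformation, and the paper only transports Hochschild homology that way (cf.~\cite{Per22}).
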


One can assume that $\rank\Num(\schr)\leq\rank\Num(\schs)$ after a mutation. Then the semi-orthogonal decomposition \eqref{4_label} is said to have \emph{type $(1,1)$} if the two sublattices have the same rank and is said to have \emph{type $(0,2)$} otherwise.

\begin{proposition}\label{5_Hochschild-estimation}
Consider the semiorthogonal decomposition \eqref{4_label}, then
\begin{enumerate}
	\item $\dim\Hoch_0(\schr)=\dim\Hoch_0(\schs)=1$ if \eqref{4_label} has type $(1,1)$;
	\item $\Hoch_0(\schr)=0$ and $\Hoch_*(\schr)$ is supported in degree $\pm1$ if \eqref{4_label} has type $(0,2)$.
\end{enumerate}
	
\begin{proof}
One just applies Proposition \ref{3_observation-3} and uses the definitions.
\end{proof}
\end{proposition}

\subsection{The conclusion}\label{5_ss-conclusion}
Now we can conclude Theorem \ref{1_main}.

\begin{proposition}\label{5_main-rank1}
The semiorthogonal decomposition \eqref{4_label} cannot have type $(1,1)$.

\begin{proof}
Otherwise, the admissible subcategory $\schr$ satisfies (a) in Corollary \ref{4_En-rank1-exclusion}. One can see that it also satisfies (b) according to Proposition \ref{5_numerical}. In this case, one can apply Corollary \ref{4_En-rank1-exclusion} to find a contradiction.
\end{proof}
\end{proposition}

\begin{remark}
This argument applies to some admissible subcategories of Enriques categories, such as Kuznetsov components of Enriques surfaces \cite{LNSZ21,LSZ22}. 
\end{remark}

So the semiorthogonal decomposition \eqref{4_label} can only have type $(0,2)$. To conclude the proof of Theorem \ref{1_main}, it suffices to prove the following statement.

\begin{proposition}\label{5_main-rank0}
The semiorthogonal decomposition \eqref{4_label} has type $(0,2)$ if and only if the admissible subcategory $\schr$ is the zero category.
	
\begin{proof}
Otherwise, one can assume that $\Hoch^0(\schr)$ has no non-trivial idempotent after passing to a non-trivial admissible subcategory using Proposition \ref{3_observation-1}.

At first, one has $\Hoch_0(\schr)=\Num(\schr)=0$ and $\Hoch_{\bullet}(\schr)$ is supported in degree $\pm1$ by Proposition \ref{5_Hochschild-estimation}. The spherical functor $\funct{F}$ in Remark \ref{4_En-is-quasi-En-rmk} satisfies $\funct{F}_*=0$ since the category $\cate{D}^b(S)$ has trivial odd Hochschild homology. It follows from the construction that the twist $\funct{T}_{\funct{F}}$ on $\cate{D}^b(S)$ induces the trivial Hodge isometry.

In either case, one claims that $\funct{T}_{\funct{F}}$ and $\Pi$ satisfy the conditions in Proposition \ref{2_K3-property-Bridgeland-coro}, which ensures that $\funct{T}_{\funct{F}}\simeq[2m]$. At first, one has $\funct{T}_{\Pi\funct{F}}\Pi\simeq\Pi\funct{T}_{\funct{F}}$ by construction of spherical functors. It follows that $\funct{T}_{\funct{F}}\Pi\simeq\Pi\funct{T}_{\funct{F}}$ as $\Pi\funct{F}\simeq\funct{F}$, see Remark \ref{4_En-is-quasi-En-rmk}. So it remains to describe the set $\Fix(\Pi):=\{\sigma\in\Stab(S)\mid\Pi.\sigma=\sigma\}$. It contains some $\sigma_0$ in $\Stab^{\dag}(S)$ according to \cite[Lemma 6.3 and Lemma 7.2]{PPZ26}. Any element $\tau$ in $\Fix(\Pi)$ induces an $\funct{S}_{\schx}[-2]$-fixed stability condition $\tau_{\Pi}$ on $\schx$ by Example \ref{3_invariant-stab-Enriques-Example}. Any two induced stability conditions on $\schx$ are in the same $\grp$-orbit by virtue of Theorem \ref{2_Serre-invariant}. Taking the residual action, the induced stability condition $\tau_{\Pi}$ goes back to $\tau$ up to a $\grp$-action according to Proposition \ref{3_invariant-stab-go-back}. So one can conclude that $\Fix(\Pi)=\sigma_0.\grp$ using \cite[Proposition 3.13]{Liu26a}. 

Consequently, one has $\funct{T}_{\funct{F}}\cong[2m]$ for some integer $m$. Then $\funct{S}_{\schr}\simeq[2m+1]$ using Proposition \ref{4_quasi-En-rank0}. It follows that $\Hoch_{2m+1}(\schr)\simeq\Hoch^0(\schr)\neq0$. So one has $m\in\{0,-1\}$ because $\Hoch_{\bullet}(\schr)$ is supported in degree $\pm1$. 

Now we claim that $\funct{F}\funct{R}\simeq\funct{Id}_S\oplus[2m-1]$. Here one considers the triangle $$\funct{F}\funct{R}\rightarrow\funct{Id}_S\stackrel{\alpha}{\rightarrow}[2m]\rightarrow\funct{F}\funct{R}[1]$$ 
of dg functors and it suffices to show $\alpha=0$. It is automatic when $m=-1$. If $m=0$, then $\alpha\in\Hoch^0(S)=\bc\cdot\funct{Id}_S$ and $\alpha\neq0$ would imply $\funct{F}\funct{R}=0$. However, it will lead to $\funct{F}\oplus\funct{F}\funct{S}_{\schr}[-2]=0$ and then $\funct{Id}_{\schr}\oplus\funct{S}_{\schr}[-2]\simeq\funct{R}\funct{F}=0$, impossible.

The construction in Remark \ref{4_En-is-quasi-En-rmk} ensures $\Pi\funct{F}\simeq\funct{F}$. So one has $$\Pi\oplus\Pi[2m-1]\simeq\Pi\funct{F}\funct{R}\simeq\funct{F}\funct{R}\simeq\funct{Id}_S\oplus[2m-1]$$
which is impossible by Proposition \ref{3_observation-2} as $\Pi$ is a non-trivial involution.
\end{proof}
\end{proposition}

In conclusion, we exclude any possible non-trivial decomposition \eqref{4_label}.

\begin{theorem}[Theorem \ref{1_main}]\label{5_main}
The Enriques category $\schx$ is indecomposable.

\begin{proof}
One combines Proposition \ref{5_main-rank1} and Proposition \ref{5_main-rank0}.
\end{proof}
\end{theorem}

\subsection{Some further observations}\label{SS_4.3}
The two Enriques categories discussed in this note can be seen as special examples of a large family of Enriques categories over K3 surfaces introduced in \cite{Liu26b}. Here we include a brief introduction.

Example \ref{2_EC-example1} is well-defined for any quartic K3 surface, which belongs to a very general family of K3 surfaces of degree $4$. Similarly, Example \ref{2_EC-example2} is defined for any K3 surface in a family of very general K3 surfaces of degree $10$. 

This phenomenon is generalized. It is shown in \cite{Liu26b} that, for any positive integer $t$ satisfying some arithmetic conditions, one can find a very general family of polarized K3 surfaces with degree $2t$, such that there exists an Enriques category deformation equivalent to Example \ref{2_EC-example1} and Example \ref{2_EC-example2} over any member in the family. 

Any polarized K3 surface in such a very general family is called a \emph{Bridgeland--Enriques general} K3 surface. The key properties are the following.

\begin{theorem}\label{5_BE-properties}
Consider a Bridgeland--Enriques general K3 surface $S$ of degree $2t$ for some $t\geq 2$, then there exists an Enriques category $\sche$ over $S$ such that 
\begin{enumerate}
	\item the involution $\Pi$ on $\cate{D}^b(S)$ generating the dual group action on $\cate{D}^b(S)$ fixes a stability condition $\sigma_0$ in $\Stab^{\dag}(S)$;
	\item the $\Pi$-invariant sublattice of $\Num(S)$ has rank two with intersection matrix
	$$\begin{pmatrix}
		-2&0\\
		0&-2
	\end{pmatrix}$$
	under the Euler pairing $\chi_S$;
	\item $\sche$ is a noncommutative deformation of the Kuznetsov component of a quartic double solid or a special Gushel--Mukai threefold.
\end{enumerate}

\begin{proof}
The assertion (1) is a part of \cite[Definition 1.1]{Liu26b}. The assertion (2) follows from \cite[Proposition 3.11]{Liu26b}. The assertion (3) is a combination of \cite[Theorem 1.5 and Theorem 1.12]{Liu26b} and the fact that Kuznetsov components of Gushel--Mukai varieties with dimension three or five are deformation equivalent (see \cite[Lemma 3.8 and Proposition A.2]{KP18} and \cite[Theorem 1.6]{KP23}).
\end{proof}
\end{theorem}

Then one can generalize Proposition \ref{5_numerical} as follows.

\begin{proposition}\label{5_BE-corollary1}
Consider an Enriques category $\sche$ in Theorem \ref{5_BE-properties}, then one has $\Hoch_{\bullet}(\sche)=\bc^{10}[1]\oplus\bc^2[0]\oplus\bc^{10}[-1]$. Also, the numerical Grothendieck group $\Num(\sche)$ has rank two and the Euler pairing on it is symmetric and negative definite.

\begin{proof}
The statement about Hochschild homology follows from Proposition \ref{5_numerical} and \cite[Theorem 3.5]{Per22}. Then one concludes by Proposition \ref{3_observation-3} and Theorem \ref{5_BE-properties}(2).
\end{proof}
\end{proposition}

Using this, one can prove the following analogues of Example \ref{3_invariant-stab-Enriques-Example}.

\begin{proposition}\label{5_BE-corollary2}
Consider an Enriques category $\sche$ in Theorem \ref{5_BE-properties}, then
\begin{enumerate}
	\item the forgetful functor $\sche=\cate{D}^b(S)_{\Stwo^\vee}\rightarrow\cate{D}^b(S)$ satisfies Proposition \ref{3_invariant-stab-general};
	\item the Euler form of $\cate{D}^b(S)$ is positive definite on $\ker(f_{\br})$ for the map $f\colon\Num(S)\rightarrow\Num(\sche)$ induced by the forgetful functor $\cate{D}^b(S)=\sche_{\Stwo}\rightarrow\sche$.
\end{enumerate}

\begin{proof}
(1) It suffices to verify that $\amalg:=\funct{S}_{\sche}[-2]$ induces the trivial action on $\Num(\sche)$ by Remark \ref{3_invariant-stab-general-rem}. Proposition \ref{5_BE-corollary1} implies that the form $\chi_{\sche}$ is non-degenerate and symmetric on $\Num(\sche)$, hence $(\funct{S}_{\sche})_*=\id_{\Num(\sche)}$ by Serre duality. So $\amalg_*$ is trivial, too.

(2) The inflation functor $\funct{Inf}\colon\sche\rightarrow\cate{D}^b(S)$ induces a homomorphism $i\colon \Num(\sche)\rightarrow\Num(S)$. The adjunction between inflation and forgetful functor provides
$$\chi_S(i_{\br}(x),y)=\chi_{\sche}(x,f_{\br}(y))$$
so $\ker(f_{\br})$ is the orthogonal of $\image(i_{\br})$ in $\Num(S)_{\br}$ with respect to $\chi_S$, because $\chi_{\sche}$ is non-degenerate (Proposition \ref{5_BE-corollary1}). The Euler pairing on $\Num(S)_{\br}$ has signature $(\rho,2)$ where $\rho$ is the Picard rank of $S$, so it remains to show that $\image(i_{\br})$ is negative definite with respect to $\chi_S$. This follows from adjunction and Proposition \ref{5_BE-corollary1}.
\end{proof}
\end{proposition}

In this case, the argument for Theorem \ref{1_main} almost works verbatim for $\sche$. The only missing ingredient is Theorem \ref{2_Serre-invariant}.

\begin{corollary}
Consider an Enriques category $\sche$ in Theorem \ref{5_BE-properties} and assume that all the $\funct{S}_{\sche}$-invariant stability conditions on $\sche$ are in the same $\grp$-orbit of the complex manifold $\Stab(\sche)$, then $\sche$ is indecomposable.

\begin{proof}
One applies the argument for Theorem \ref{1_main} to the Enriques category $\sche$ by using Theorem \ref{5_BE-properties}(1), Proposition \ref{5_BE-corollary1}, and Proposition \ref{5_BE-corollary2}.
\end{proof}
\end{corollary}

Alternatively, one can prove indecomposability of the Enriques category $\sche$ by assuming a strong Bridgeland conjecture (see \cite[Conjecture 1.2]{Bri08} and \cite{BB17}). It is used to replace the function of Proposition \ref{2_K3-property-Bridgeland-coro} in Proposition \ref{5_main-rank0}.

\begin{corollary}
Consider an Enriques category $\sche$ in Theorem \ref{5_BE-properties} and the associated K3 surface $S$. Assume that $\Stab^{\dag}(S)$ is contractible and $\Aut(\cate{D}^b(S))=\Aut^{\dag}(\cate{D}^b(S))$, then $\sche$ is indecomposable.
	
\begin{proof}
It remains to exclude non-trivial admissible subcategories with rank zero numerical Grothendieck groups. By the proof of Proposition \ref{5_main-rank0}, the only missing ingredient is a replacement of Proposition \ref{2_K3-property-Bridgeland-coro}.
	
Using \cite[Theorem 7]{Av14} and the contractibility of $\Stab^{\dag}(S)$, the subspace
$$\Fix^{\dag}(\Pi):=\{\sigma\in\Stab^{\dag}(S)\mid \Pi.\sigma=\sigma\}$$
is connected if it is non-empty. The non-emptiness is Theorem \ref{5_BE-properties}(1) and we choose a point $\sigma_0$ in it. Then we claim that $\Fix^{\dag}(\Pi)=\sigma_0.\grp$ and it suffices to show that every $\grp$-orbit in $\Fix^{\dag}(\Pi)$ is open.

According to \cite[Corollary 1.3]{Bri07}, the projection
$$\Stab^{\dag}(S)\rightarrow\Hom(\Num(S),\bc),\quad \sigma=(\schp,Z)\mapsto Z$$
is a local homeomorphism onto its image. It is $\Pi$-equivariant and its restriction to $\Fix^{\dag}(\Pi)$ is locally a homeomorphism onto the $\Pi_*$-fixed locus, so it is sufficient to work on the space of central charges. 

We identify $\Num(S)_{\bc}$ with $\Hom(\Num(S),\bc)$ using the Mukai pairing. The $\Pi$-invariant subspace $(\Num(S)_{\br})_{\Pi_*}$ of $\Num(S)_{\br}$ is a positive definite real plane with respect to the Mukai pairing by Theorem \ref{5_BE-properties}(2). So, for any $\sigma=(\schp,Z)\in\Fix^{\dag}(\Pi)$, the real and imaginary parts of $Z$ form an oriented basis of $(\Num(S)_{\br})_{\Pi_*}$. The group $\GLp(2,\br)$ acts transitively on the ordered bases of this plane having the same orientation, so nearby $\Pi$-invariant central charges differ from $Z$ simply by an orientation-preserving change of coordinates on $\bc$. It means that the $\GLp(2,\br)$-orbit of $Z$ is open and, consequently, the orbit $\grp.\sigma$ is open in $\Fix^{\dag}(\Pi)$.

In this case, one concludes that $\Fix^{\dag}(\Pi)=\sigma_0.\grp$. Then we can apply the argument of Proposition \ref{2_K3-property-Bridgeland-coro} to run Proposition \ref{5_main-rank0} as any autoequivalence $\Phi$ on $\cate{D}^b(S)$ preserves $\Stab^{\dag}(S)$ by our assumption.
\end{proof}
\end{corollary}

\section*{Acknowledgments}
The author would like to thank his advisors Laura Pertusi and Paolo Stellari for their support and helpful comments, and thank Sasha Kuznetsov for his suggestions on a preliminary version, which greatly improved the readability of the note.

\section*{AI Disclosure}
This note was solely written by the author and AI was used to assist editorial corrections. Before submitting this note to arXiv, the author was informed that AI could generate a substantially simpler proof, similar to the one presented here.

\end{document}